\documentclass[11pt,final]{amsart}
\usepackage{amstext, amscd, amsmath, amssymb}
\usepackage{mathtools}

\numberwithin{equation}{section}
\usepackage[a4paper]{geometry}
\let\OLDthebibliography\thebibliography
\renewcommand\thebibliography[1]{
  \OLDthebibliography{#1}
  \setlength{\parskip}{0pt}
  \setlength{\itemsep}{2pt plus 0.5ex}
}

\makeatletter
\def\@cite#1#2{{\m@th\upshape\bfseries%
[{#1\if@tempswa{\m@th\upshape\mdseries, #2}\fi}]}}
\makeatother
\theoremstyle{plain}
\newtheorem{theorem}{Theorem}[section]

\newtheorem{proposition}[theorem]{Proposition}
\newtheorem{lemma}[theorem]{Lemma}
\theoremstyle{definition}

\theoremstyle{remark}

\mathtoolsset{centercolon}
  \newcommand{\A}{{\mathcal{A}}}
  \newcommand{\B}{{\mathcal{B}}}
  \newcommand{\C}{{\mathcal{C}}}
  \newcommand{\D}{{\mathcal{D}}}
  \newcommand{\E}{{\mathcal{E}}}
  \newcommand{\F}{{\mathcal{F}}}
  \newcommand{\G}{{\mathcal{G}}}

  \newcommand{\J}{{\mathcal{J}}}
  \newcommand{\K}{{\mathcal{K}}}
\renewcommand{\L}{{\mathcal{L}}}
  \newcommand{\M}{{\mathcal{M}}}
  \newcommand{\N}{{\mathcal{N}}}
\renewcommand{\O}{{\mathcal{O}}}

\renewcommand{\S}{{\mathcal{S}}}
  \newcommand{\T}{{\mathcal{T}}}
  \newcommand{\U}{{\mathcal{U}}}

  \newcommand{\X}{{\mathcal{X}}}
  \newcommand{\Y}{{\mathcal{Y}}}
  
\def\al{\alpha}
\def\be{\beta}

\def\De{\Delta}
\def\de{\delta}
\def\ze{\zeta}
\def\io{\iota}

\def\la{\lambda}

\def\om{\omega}

\newcommand{\bC}{\mathbb{C}}

\newcommand{\bN}{\mathbb{N}}

\newcommand{\bZ}{\mathbb{Z}}
\newcommand{\bR}{\mathbb{R}}

\newcommand{\fg}{{\mathfrak{g}}}
\newcommand{\fH}{{\mathfrak{H}}}
\newcommand{\fh}{{\mathfrak{h}}}

\newcommand{\foral}{\text{ for all }}
\newcommand{\qand}{\quad\text{and}\quad}

\newcommand{\ca}{\mathrm{C}^*}

\newcommand{\cenv}{\mathrm{C}^*_{\textup{env}}}

\newcommand{\ol}{\overline}
\newcommand{\wt}{\widetilde}
\newcommand{\wh}{\widehat}

\newcommand{\Aut}{\operatorname{Aut}}

\newcommand{\cov}{\operatorname{c}}

\newcommand{\fock}{\operatorname{F}}

\newcommand{\id}{{\operatorname{id}}}

\newcommand{\mt}{\emptyset}

\newcommand{\spn}{\operatorname{span}}

\newcommand{\sca}[1]{\left\langle#1\right\rangle} 
\newcommand{\bo}[1]{\mathbf{#1}} 

\begin{document}

\title[The reduced Hao--Ng isomorphism problem for product systems]{The reduced Hao--Ng isomorphism problem for non-degenerate product systems}

\author[I.A. Paraskevas]{Ioannis Apollon Paraskevas}
\address{Department of Mathematics\\ National and Kapodistrian University of Athens\\ Athens\\ 157 84\\ Greece}
\email{ioparask@math.uoa.gr}

\subjclass[2020]{46L08, 47L55, 46L05, 46K50, 46L55}
\keywords{Product systems, Fock space, Fock covariance, Hao--Ng isomorphism, reduced crossed products of operator algebras}

\begin{abstract}
We prove that the reduced Hao--Ng isomorphism problem has an affirmative answer for a generalised gauge action of a locally compact Hausdorff group on a non-degenerate product system over a unital subsemigroup of a discrete group. In the possible absence of a faithful conditional expectation on the reduced crossed product, we employ the Plancherel weight and integrability of reduced coactions. Using this approach, we establish that the identity representation of the induced product system inside the reduced crossed product of the Fock C*-algebra is Fock covariant.
\end{abstract}

\maketitle

\section{Introduction}

\subsection{Product systems framework}

Product systems over a unital subsemigroup $P$ of a discrete group $G$ provide a framework for modelling irreversible dynamics. They include operator algebras arising from subshifts, graphs and higher-rank graphs, topological graphs, and semigroup actions on C*-algebras. They first appeared in the work of Arveson \cite{Arv89} and were later studied by Dinh \cite{Din91} for discrete subsemigroups of $\bR_+$. Nica \cite{Nic92} considered semigroups with a quasi-lattice order, where the left regular representation has particularly useful properties. The relations coming from the Fock representation, now known as Nica covariance, give a more manageable class of representations. Nica's setting may be viewed as a product system with one-dimensional fibres.

Pimsner \cite{Pim97} developed the theory for $P=\bZ_+$, where a product system is determined by a single C*-correspondence. This construction includes graph C*-algebras, crossed products by $\bZ$, and algebras associated with partial dynamical systems; see for example \cite{Kat03}. Motivated by \cite{Nic92,Pim97} and his work with Raeburn \cite{FR98}, Fowler \cite{Fow02} initiated the study of product systems over quasi-lattice ordered semigroups. Brownlowe, Larsen and Stammeier \cite{BLS18} later considered product systems arising from dynamical systems over right LCM semigroups. Kwa\'{s}niewski and Larsen \cite{KL19a,KL19b} extended this theory to general product systems over right LCM semigroups, which were further studied by Dor-On, Kakariadis, Katsoulis, Laca and Li \cite{DKKLL22}, and Kakariadis, Katsoulis, Laca and Li \cite{KKLL23}. Beyond this setting, Wick ordering and Nica covariance are no longer available. Nevertheless, Cuntz, Deninger and Laca \cite{CDL13} showed that the Fock representation can still be used as the model for covariance.

Coactions by the ambient group $G$ have played a central role in the theory of product systems, since they allow one to use tools from the theory of Fell bundles. Dor-On, Kakariadis, Katsoulis, Laca and Li \cite{DKKLL22} observed that the normal coaction by $G$ on $\T_\la(X)$ gives a canonical Fell bundle, denoted here by $\F\C X$. Its reduced C*-algebra is $\T_\la(X)$, while its full C*-algebra $\T_{\cov}^{\fock}(X)$ need not coincide with the universal Toeplitz algebra $\T(X)$ of all representations of $X$. A representation of $X$ is called Fock covariant when it factors through $\T_{\cov}^{\fock}(X)$. It was shown in \cite{DKKLL22} that, for compactly aligned product systems over right LCM semigroups, Fock covariance agrees with Nica covariance. In this case, $\T_{\cov}^{\fock}(X)$ is the universal Nica covariant C*-algebra $\N\T(X)$. 

Li \cite{Li12}, building on ideas from \cite{CDL13}, introduced relations that record not only the semigroup multiplication but also the principal right ideals and their intersections, known as the constructible right ideals. These relations are modelled by the Fock representation and were studied further by Kakariadis, Katsoulis, Laca and Li \cite{KKLL22}, also in connection with the inverse semigroup models of Norling \cite{Nor14}. Laca and Sehnem \cite{LS22} gave a complete description of the relations that make an isometric representation of a semigroup Fock covariant. In \cite{KP25}, Kakariadis and the author gave a characterisation of injective equivariant Fock covariant representations of a product system without any assumptions on the semigroup or the product system.

A long-standing problem has been whether there exists a boundary quotient of $\T(X)$ for the injective equivariant Fock covariant representations of $X$, and how this quotient relates to the C*-envelope of the tensor algebra $\T_\la(X)^+$, in analogy with the Cuntz--Pimsner algebra $\O_X$ of Katsura \cite{Kat04} for $P=\bZ_+$. Carlsen, Larsen, Sims and Vittadello \cite{CLSV11} proved the existence of such a terminal object in the case of compactly aligned product systems over quasi-lattice ordered pairs $(G,P)$ and under some assumptions that are satisfied by many important cases, such as $(\bZ^d,\bZ_+^d)$. The universal construction for general product systems was given by Sehnem \cite{Seh19}, who introduced the quotient $A\times_X P$. Here $A$ denotes the coefficient algebra of $X$, i.e., the unit fibre $A:= X_{e_P}$. The key feature of $A\times_X P$ is that $A$ embeds in $A\times_X P$, and for a $*$-representation of $A\times_X P$, injectivity on $A$ forces injectivity on the fixed-point algebra of $A\times_X P$. Dor-On and Katsoulis \cite{DK20} then settled the problem for compactly aligned product systems over abelian lattice ordered groups by showing that the terminal object for injective equivariant Fock covariant representations exists and coincides with the C*-envelope of the tensor algebra, and with $A\times_X P$.
In \cite{DKKLL22}, Dor-On, Kakariadis, Katsoulis, Laca, and Li identified $A\times_X P$ with the full C*-algebra of the strong covariant bundle, denoted by $\S\C X$. Furthermore, in the compactly aligned right LCM case, they proved that the terminal object is canonically $*$-isomorphic to both the reduced C*-algebra of this Fell bundle, denoted by $A\times_{X,\la}P$, and the coaction C*-envelope. Note that in the abelian lattice ordered case the full and reduced C*-algebras of $\S\C X$ coincide. Finally, Sehnem \cite{Seh22} proved that completely isometric representations of $\T_\la(X)^+$ automatically admit a conditional expectation. This led to the identification of $A\times_{X,\la}P$ with $\cenv(\T_\la(X)^+)$, which resolved the problem in general and unified all previous results.

\subsection{The reduced Hao--Ng isomorphism problem}
Suppose now that a locally compact Hausdorff group \(\fH\) acts on $\T_\la(X)$ by a generalised gauge action
$\al\colon\fH\to\Aut(\T_\la(X))$, i.e., a point-norm continuous homomorphism such that $\al_\fh(\la_p(X_p))=\la_p(X_p)$ for every $p\in P$ and
$\fh\in\fH$.  There is an induced product system
\[
X\rtimes_{\al,\la}\fH
=
\{X_p\rtimes_{\al,\la}\fH\}_{p\in P}
\]
over $P$, with coefficient algebra
\(A\rtimes_{\al,\la}\fH\), realised in the reduced crossed product $\T_\la(X)\rtimes_{\al,\la}\fH$. The action $\al$ extends to an action $\dot\al$ on the C*-envelope of $\T_\la(X)^+$ which is canonically $*$-isomorphic with
$A\times_{X,\la}P$.  The reduced Hao--Ng isomorphism problem then asks whether the reduced strong covariant functor commutes with the
reduced crossed product functor, namely whether there is a
canonical $*$-isomorphism
\[
(A\rtimes_{\al,\la}\fH)
\times_{X\rtimes_{\al,\la}\fH,\la}P
\ \simeq\
(A\times_{X,\la}P)
\rtimes_{\dot\al,\la}\fH.
\]

We begin by discussing the problem in the case $P=\mathbb Z_+$, where the product system $X$ is determined by a single C*-correspondence, and the preceding question reduces to the original problem for Cuntz--Pimsner algebras as introduced by Hao and Ng \cite{HN08}.
Hao and Ng \cite{HN08} established the corresponding $*$-isomorphism for actions of
amenable locally compact Hausdorff groups. As applications of their results, they recover previous results on Hilbert bimodules by Abadie \cite{Aba10}, and on graph C*-algebras by Kumjian and Pask \cite{KP99}.
Furthermore, Katsoulis \cite{Kat17} connected the Hao--Ng isomorphism problem with the work of Echterhoff, Kaliszewski, Quigg and Raeburn \cite{EKQR06} on imprimitivity theorems for C*-dynamical systems.

The amenability assumption
was subsequently weakened in several directions. In particular, B\'edos, Kaliszewski,
Quigg and Robertson \cite{BKQR15} resolved the problem for actions of exact discrete groups. In a series of works initiated by Katsoulis and Ramsey \cite{KR19} and followed by \cite{Kat17, KR21}, the authors developed a very effective non-selfadjoint approach to the reduced Hao--Ng isomorphism problem. In \cite{KR21} it was proved that in the case of $P=\bZ_+$ and for any locally compact Hausdorff group $\fH$ there exists a canonical $*$-isomorphism
\[
\T_\la(X \rtimes_{\al, \la} \fH) {\simeq} \T_\la(X) \rtimes_{\al, \la} \fH,
\]
that restricts to a completely isometric isomorphism
\[
\T_\la(X \rtimes_{\al, \la} \fH)^+ {\simeq} \T_\la(X)^+ \rtimes_{\al, \la} \fH. 
\]
This was previously shown when $\fH$ is discrete in \cite{Kat17} and when $\fH$ is locally compact Hausdorff and abelian in \cite{KR19}.
By employing the identification of the C*-envelope of the tensor algebra of a C*-correspondence obtained by Katsoulis and Kribs in \cite{KK06}, the problem is reduced in the case $P=\bZ_+$ to determining whether there exists a canonical $*$-isomorphism
\begin{equation} \label{eq:cenvcp}
\cenv(\T_\la(X)^+ \rtimes_{\al, \la} \fH)\stackrel{?}{\simeq} \cenv(\T_\la(X)^+) \rtimes_{\dot{\al}, \la} \fH.    
\end{equation}

This isomorphism \eqref{eq:cenvcp} was established in \cite{Kat17} for any operator algebra that admits a contractive approximate unit (in the place of $\T_\la(X)^+$) when $\fH$ is discrete. In \cite{KP25} Kakariadis and the author removed the assumption of the contractive approximate unit. In \cite{KR19} it was established for operator algebras that admit a contractive approximate unit and $\fH$ locally compact Hausdorff and abelian, and in \cite{KR21} it was established for a locally compact Hausdorff group $\fH$ and hyperrigid operator algebras (in the sense of Arveson \cite{Arv11}) that admit a contractive approximate unit.  In a recent work,
Dor-On and Thompson \cite{DT26} proved \eqref{eq:cenvcp} for any locally compact Hausdorff group and any operator algebra that admits a contractive approximate unit, hence they resolved the reduced Hao--Ng isomorphism problem for all
non-degenerate product systems over $P=\bZ_+$.

We now move the discussion to product systems over more general semigroups. The reduced Hao--Ng isomorphism problem was resolved for a compactly aligned product system over an abelian lattice order $(G,P)$, by Dor-On and Katsoulis \cite{DK20} when $\fH$ is a discrete group, and subsequently by Katsoulis \cite{Kat20} when $\fH$ is a locally compact Hausdorff and abelian group.
It was then resolved by Dor-On, Kakariadis, Katsoulis, Laca and Li \cite{DKKLL22} for a compactly aligned product system over a right LCM semigroup $P$ with $\fH$ discrete. Kakariadis and the author \cite{KP25} resolved the problem for any product system over a unital subsemigroup $P$ of a discrete group $G$ and $\fH$ discrete. We note that in \cite{KP25} the product system $X$ is not assumed to be non-degenerate. The non-degeneracy of $X$ is usually assumed in order to obtain a contractive approximate unit for the tensor algebra $\T_\la(X)^+$ from $A$.

The use of the C*-envelope toolkit for the reduced Hao–Ng isomorphism problem stands as an essential example of the interaction between selfadjoint and non-selfadjoint operator algebras. This approach---pioneered by Katsoulis and Ramsey \cite{KR19}, and followed by many researchers in numerous subsequent works \cite{DKKLL22, DK20, KP25, Kat17, Kat20, KR21} including the present paper—demonstrates how co-universal representations in the
category of non-selfadjoint operator algebras yield a canonical $*$-isomorphism, providing a solution to a selfadjoint problem.

\subsection{Approach and main results}

The purpose of this work is to settle the reduced Hao--Ng isomorphism problem for a non-degenerate product system $X$ in full generality, i.e., for any unital subsemigroup $P$ of a discrete group $G$ and any locally compact Hausdorff group $\fH$.

The main approach in \cite{DKKLL22, DK20, Kat20} has been to use the independence condition for right LCM semigroups and compact alignment of the product system to show Nica covariance of the identity representation 
\begin{equation}\label{eq:ident}
X \rtimes_{\al, \la} \fH \hookrightarrow \T_\la(X) \rtimes_{\al, \la} \fH,
\end{equation}
in order to obtain a canonical $*$-isomorphism
\begin{equation}\label{eq:fccp}
\T_\la(X \rtimes_{\al, \la} \fH) \simeq \T_\la(X) \rtimes_{\al, \la} \fH.
\end{equation}
From there it follows that
\begin{equation}\label{eq:tencp}
\T_\la(X \rtimes_{\al, \la} \fH)^+ \simeq \T_\la(X)^+ \rtimes_{\al, \la} \fH,
\end{equation}
and then applying the C*-envelope machinery concludes the proof. Moving beyond the right LCM condition and in the absence of independence and compact alignment it is unclear whether the $*$-isomorphism \eqref{eq:fccp} holds. However, the completely isometric isomorphism \eqref{eq:tencp} is enough in order to conclude the proof.
In \cite{KP25}, Kakariadis and the author proved that \eqref{eq:tencp}
holds for any product system $X$ when $\fH$ is discrete, by showing that the identity representation
\eqref{eq:ident} is Fock covariant. The main obstruction in passing from the discrete case
to the locally compact Hausdorff case is the proof that the identity representation is
Fock covariant. In particular, in \cite{KP25} the discreteness of $\fH$ is used
to obtain a faithful conditional expectation on the reduced crossed product
$\T_\la(X)\rtimes_{\al,\la}\fH$. Together with the characterisation of
injective equivariant Fock covariant representations obtained in \cite{KP25}, this
yields Fock covariance of the identity representation.

In order to overcome this difficulty for locally compact Hausdorff groups, we work with the averaging map associated to a slice map of the Plancherel weight of the reduced group C*-algebra $\ca_\la(\fH)$ and the reduced dual coaction by $\fH$ on the reduced crossed product $\T_\la(X)\rtimes_{\al,\la}\fH$; see Subsection \ref{Ss:Aver}. This averaging map was introduced and studied by Buss and Meyer, and Buss, in \cite{BM09,Bus10} in their study of integrability of coactions, and we use it in order to replace the faithful conditional expectation that one has in the discrete case.  After multiplying by a suitable compactly supported approximate unit obtained by non-degeneracy of $X$, arbitrary elements of the reduced crossed product $\T_\la(X)\rtimes_{\al,\la}\fH$ are localised inside the square-integrable domain of the coaction, where the averaging map is well-defined.

In particular, in Lemma~\ref{L:local1}, Lemma~\ref{L:invar} and Lemma~\ref{L:local2}, we prove the essential properties of the averaging map and develop this localisation approach, that are needed to prove our main result.

\medskip

\noindent
\textbf{Theorem~A.}
(Theorem~\ref{T:idisfock})
{\it 
Let $X$ be a non-degenerate product system over a unital subsemigroup $P$ of a discrete group
$G$, and let $\al$ be a generalised gauge action of a locally compact Hausdorff group
$\fH$ on $\T_\la(X)$.  The identity representation
\[
\io^\rtimes \colon X\rtimes_{\al,\la}\fH
\to
\T_\la(X)\rtimes_{\al,\la}\fH
\]
is an injective Fock covariant representation that admits a normal coaction by $G$.
}

\medskip

With this at hand, applying the results of Dor-On and Thompson \cite{DT26} and Sehnem \cite{Seh22} gives the asserted canonical $*$-isomorphism. In particular, we obtain the positive resolution of the reduced Hao--Ng isomorphism problem for non-degenerate product systems in full generality.

\medskip

\noindent
\textbf{Theorem~B.}
(Theorem~\ref{T:HaoNg})
{\it 
Let $P$ be a unital subsemigroup of a discrete group $G$ and let $X$ be a non-degenerate product system over $P$.
Let $\al$ be a generalised gauge action of a locally compact Hausdorff group $\fH$ on $\T_\la(X)$.
Then the identity representation $\io^\rtimes \colon X\rtimes_{\al,\la}\fH \to \T_{\la}(X) \rtimes_{\al,\la} \fH$ induces a completely isometric isomorphism
\[
\T_\la(X \rtimes_{\al, \la} \fH)^+ \simeq \T_\la(X)^+ \rtimes_{\al, \la} \fH.
\]
Consequently, the reduced Hao--Ng isomorphism problem has an affirmative answer, i.e.,
\[
(A \rtimes_{\al, \la} \fH) \times_{X \rtimes_{\al, \la} \fH,\la} P
\simeq
(A \times_{X, \la} P) \rtimes_{\dot \al, \la} \fH,
\]
by a canonical $*$-isomorphism, where $\dot{\al}$ is the induced action of $\fH$ on $A \times_{X,\la} P$.
}

\subsection{Contents}
The paper is organised as follows.  In Section 2 we recall the required
material on operator algebras, coactions, crossed products and Fell bundles, while we also fix notation. Section 3 is
devoted to weights, slice maps and integrable elements of coactions, and we establish some properties of the averaging map that are needed in the proof of the main result.
In Section 4 we recall the required facts on product systems, and employ our localisation approach to prove that the identity representation of the induced product system inside the reduced crossed product of the Fock C*-algebra is Fock covariant. We then apply the C*-envelope machinery to resolve the reduced Hao--Ng isomorphism problem for non-degenerate product systems.

\subsection{Acknowledgements}
The author is grateful to Evgenios Kakariadis for fruitful discussions, as well as for his careful reading and comments and suggestions on a draft of this manuscript. The author is also grateful to Michalis Anoussis, Alexandros Chatzinikolaou and Aristides Katavolos for their comments and suggestions on a draft of this manuscript.
The author acknowledges that this research work was supported by the Hellenic Foundation for Research and Innovation (HFRI) under the 5th Call for HFRI PhD Fellowships (Fellowship Number: 19145).
This material is based upon work supported by the Swedish Research Council under grant no. 2021-06594 while the author was in residence at Institut Mittag-Leffler in Djursholm, Sweden during the first semester of 2026.
The author would like to thank the Institut Mittag-Leffler and the organisers of the program ``Operator Algebras and Quantum Information'' for the hospitality.

\section{Preliminaries}

We write $\otimes$ for the minimal tensor product between C*-algebras.
A map between algebras with given sets of generators is called canonical if it preserves generators of the same index.
If $\X$ is a subset of a normed linear space $\Y$, then we write $[\X]$ for the closed linear span of $\X$ in $\Y$. The dual of a Banach space $\X$ is denoted by $\X^d$. To avoid any possible confusion, we always denote discrete groups by $G$ and locally compact Hausdorff groups by $\fH$. When we consider a unital subsemigroup $P$ of a group $G$ we always mean that $e_P=e_G$. We write $\C_+$ for the positive cone of a C*-algebra $\C$.

\subsection{Operator algebras}

For a comprehensive treatment and full details regarding the general theory of non-selfadjoint operator algebras, we refer the reader to \cite{BL04, Pau02}. 

By an operator algebra $\A$, we mean a norm-closed subalgebra of some $\B(H)$.
Any operator algebra possesses a C*-cover, i.e., a pair $(\C,j)$, where $\C$ is a C*-algebra and $j \colon \A \to \C$ is a completely isometric homomorphism satisfying $\C = \ca(j(\A))$. 
The \emph{C*-envelope} $(\cenv(\A),\io)$ is the co-universal C*-cover of $\A$. This means that, given any other C*-cover $(\C,j)$, there exists a unique $*$-epimorphism $\Phi \colon \C \to \cenv(\A)$ such that $\Phi \circ j = \io$. 
Hamana established the existence of the C*-envelope in \cite{Ham79}.

Many of the C*-algebras we will need in this work are going to be non-unital and we will often consider their multiplier algebras. If $\C$ is a C*-algebra we denote by $\M(\C)$ the \emph{multiplier algebra} of $\C$. If we consider $\C$ canonically as a Hilbert $\C$-module, then $\C\simeq \K(\C)$ and $\M(\C)\simeq \L(\C)$ where $\K(\C)$ denotes the generalised compact operators and $\L(\C)$ denotes the adjointable operators; see \cite[Sections~1 and~2]{Lan95}. 
In addition, if $\pi \colon \C \to \B(H)$ is a faithful non-degenerate $*$-representation, then $\pi$ extends to an injective unital $*$-homomorphism $\M(\C) \to \B(H)$ such that
\[
\M(\C)\simeq \{ a\in \B(H) : a \pi(c), \pi(c)a \in \pi(\C) \foral c\in \C\},
\]
see \cite[Proposition 3.12.3]{Ped79}. In particular, $\C$ is an ideal in the unital C*-algebra $\M(\C)$ that is \emph{essential}, i.e., if $x\in \M(\C)$ and $x c=0$ for all $c\in \C$, then $x=0$.  We note that if $\C$ is unital, then $\C=\M(\C)$.
The \emph{strict topology} on $\M(\C)$ is the topology induced by the seminorms $x \to \|xc\|$ and $x \to \|c x\|$ for $c\in \C$.

Let $\C$ and $\D$ be C*-algebras. A $*$-homomorphism $\Phi \colon \C \to \M(\D)$ is called \emph{non-degenerate} if $[\Phi(\C) \D]=\D$. This is also equivalent to the net $(\Phi(e_i))_i$ converging strictly to $1_{\M(\D)}$ for any approximate unit $(e_i)_i$ of $\C$. In this case, $\Phi$ extends to a strictly continuous unital $*$-homomorphism $\ol{\Phi} \colon \M(\C)\to \M(\D)$. If moreover, $\Phi$ is injective, then $\ol{\Phi}$ is also injective. For a proof of these facts
see \cite[Proposition 2.1 and Proposition 2.5]{Lan95}. We will consider the C*-algebras $\C\otimes \M(\D)$, $\M(\C)\otimes \D$ and $\M(\C)\otimes \M(\D)$ inside $\M(\C\otimes \D)$ without further mention. This is justified due to injectivity of the minimal tensor product of C*-algebras, and the existence of an injective unital $*$-homomorphism $\M(\C)\otimes \M(\D) \hookrightarrow\M(\C\otimes \D)$; see \cite[Pages 35--37]{Lan95}.

\subsection{Coactions and Fell bundles}

Let $\fH$ be a locally compact Hausdorff group with unit $e_{\fH}$. We denote the left Haar measure on $\fH$ by $\mu$ and we write $\De$ for the modular function of $\fH$.
We denote by $\ca(\fH)$ and $\ca_\la(\fH)$ the full group C*-algebra and the reduced group C*-algebra of $\fH$, respectively. 
We write $u_{\fH} \colon \fH \to \U(\M(\ca(\fH)))$ for the canonical embedding of $\fH$ as unitary multipliers and $\la_{\fH} \colon \ca(\fH) \to \ca_\la(\fH)$ for the left regular representation of $\fH$. Similarly, we can consider $\fH$ canonically inside $\U(\M(\ca_\la(\fH)))$, and by an abuse of notation, for a group element $\fh\in \fH$ we will write $\la_{\fH}(\fh)$ for the resulting unitary multiplier inside $\U(\M(\ca_\la(\fH)))$.
We recall that for a non-degenerate $*$-homomorphism $\Phi$ defined on a C*-algebra, we write $\ol{\Phi}$ for its strictly continuous extension on the multiplier algebra.

By the universal property of $\ca(\fH)$, there exists an injective non-degenerate $*$-homomorphism
\[
\de_\fH \colon \ca(\fH) \to \M(\ca(\fH) \otimes \ca(\fH)) 
\text{ such that }
(\ol{\de_\fH\otimes \id_{\ca(\fH)}})\circ \de_\fH=(\ol{ \id_{\ca(\fH)} \otimes \de_\fH})\circ \de_\fH.
\]
This map is the integrated form of the strictly continuous homomorphism $\fh \mapsto u_{\fH}(\fh) \otimes u_{\fH}(\fh)$.

Analogously, there is an injective non-degenerate $*$-homomorphism 
\[
\de^r_\fH \colon \ca_\la(\fH) \to \M(\ca_\la(\fH) \otimes \ca_\la(\fH))
\text{ such that }
(\ol{\de^r_\fH\otimes \id_{\ca_\la(\fH)}})\circ \de^r_\fH=(\ol{\id_{\ca_\la(\fH)} \otimes \de^r_\fH})\circ \de^r_\fH.
\]
The canonical extension of this map to the multiplier algebra $\M(\ca_\la(\fH))$ satisfies the relation
\[
\ol{\de^r_\fH}(\la_{\fH}(\fh)) = \la_{\fH}(\fh)\otimes \la_{\fH}(\fh) \foral \fh \in \fH;
\]
for further details see \cite[Pages 141--142]{Kat84}.

We say that a C*-algebra $\C$ \emph{admits a (full) coaction $\de$ by $\fH$} if there exists an injective non-degenerate $*$-homomorphism $\de \colon \C \to \M(\C \otimes \ca(\fH))$ such that 
\[
\de(\C)(1_{\M(\C)}\otimes \ca(\fH)) \subseteq \C\otimes \ca(\fH),
\]
and satisfying
\[
(\ol{\de \otimes \id_{\ca(\fH)}}) \circ \de = (\ol{\id_{\C} \otimes\de_\fH}) \circ \de.
\]
Similarly, we say that a C*-algebra $\C$ \emph{admits a reduced coaction $\de^r$ by $\fH$} if there exists an injective non-degenerate $*$-homomorphism $\de^r \colon \C \to \M(\C \otimes \ca_\la(\fH))$ such that 
\[
\de^r(\C)(1_{\M(\C)}\otimes \ca_\la(\fH)) \subseteq \C\otimes \ca_\la(\fH),
\]
and satisfying
\[
(\ol{\de^r \otimes \id_{\ca_\la(\fH)}}) \circ \de^r = (\ol{\id_{\C} \otimes \de^r_\fH}) \circ \de^r.
\]
For a  coaction $\de$ by $\fH$ on a C*-algebra $\C$ we write
\[
\M^{\rm fix}_\de:=\{x \in \M(\C) : \ol{\de}(x)=x \otimes 1_{\M(\ca(\fH))}\},
\]
for the \emph{fixed-point algebra}.
Similarly, for a reduced coaction $\de^r$ by $\fH$ we write
\[
\M^{\rm fix}_{\de^r}:=\{x \in \M(\C) : \ol{\de^r}(x)=x \otimes 1_{\M(\ca_\la(\fH))}\}.
\]

If $G$ is a discrete group, then $\ca(G)$ and $\ca_\la(G)$ are unital and therefore we can remove non-degeneracy and the inclusion conditions from the above definitions in order to obtain a coaction or a reduced coaction, see for example \cite[Remark 3.2]{DKKLL22}.
In this case, for a coaction $\de$ by $G$, the coaction identity is equivalent to the spectral spaces spanning a dense subset of $\C$, where the \emph{$g$-spectral space} is defined as 
\[
[\C]_g := \{c \in \C \mid \de(c) = c \otimes u_G(g)\} \foral g \in G;
\]
see the proof of \cite[Proposition 2.6]{Ng96}. Note that the fixed-point algebra of $\de$ is exactly the $e_G$-spectral space $[\C]_{e_G}$. In particular, the collection $\{[\C]_g\}_{g\in G}$ is a Fell bundle over $G$ (which we will define below).
If, in addition, the map $(\id_{\C}\otimes \la_G) \circ \de$ is injective, then $\de$ is called \emph{normal}.
It follows that $\de$ is normal if and only if $(\id_{\C}\otimes \la_G) \circ \de$ is a reduced coaction by $G$ on $\C$.

For more details on Fell bundles see \cite{Exe17, FD88}.
A \emph{Fell bundle} over a locally compact Hausdorff group $\fH$ is a continuous Banach bundle $\B = \{\B_{\fh}\}_{\fh \in \fH}$ such that:
\begin{enumerate}
\item there are continuous bilinear and associative \emph{multiplication maps} from $\B_{\fh} \times \B_{\fg}$ to $\B_{\fh\fg}$ such that $\|b_{\fh} b_{\fg} \| \leq \|b_{\fh}\| \cdot \|b_{\fg}\|$;
\item there are continuous conjugate linear \emph{involution maps} from $\B_{\fh}$ to $\B_{\fh^{-1}}$ such that $(b_{\fh}^*)^* = b_{\fh}$ and $\|b_{\fh}^*\| = \|b_{\fh}\|$;
\item $(b_{\fh} b_{\fg})^* = (b_{\fg})^* b_{\fh}^*$;
\item $\|b_{\fh}^* b_{\fh}\| = \| b_{\fh}\|^2$;
\item $b_{\fh}^* b_{\fh} \geq 0$ in $\B_{e_\fH}$.
\end{enumerate}

We denote by $C_c(\B)$ the space of compactly supported continuous sections of $\B$ and make it a $*$-algebra by setting
\[
(F \ast F')(\fg):= \int_{\fH} F(\fh) F'(\fh^{-1}\fg) \, d\mu(\fh) \foral F,F'\in C_c(\B),
\]
and
\[
F^*(\fg):=\De(\fg)^{-1} F(\fg^{-1})^* \foral F\in C_c(\B).
\]
The \emph{full C*-algebra} of $\B$, denoted by $\ca(\B)$, is the enveloping C*-algebra of $C_c(\B)$, i.e., the completion of $C_c(\B)$ with respect to the norm 
\[
\|F\|:=\sup\{ p(F): p \text{ is a C*-seminorm on } C_c(\B)\} \foral F\in C_c(\B).
\]

We denote by $L^2(\B)$ the Hilbert $\B_{e_\fH}$-module defined as the completion of $C_c(\B)$ with respect to the $\B_{e_\fH}$-valued inner product 
\[
\sca{F,F'}:=\int_\fH F(\fh)^* F'(\fh) \, d\mu(\fh) \foral F,F' \in C_c(\B).
\]
By universality of $\ca(\B)$ we obtain a $*$-representation
\[
\la_\B \colon \ca(\B) \to \L(L^2(\B)) \text{ such that } \la_\B(F)(F')=F \ast F'\foral F,F' \in C_c(\B),
\]
which we call the \emph{left regular representation of $\B$}. The \emph{reduced cross-sectional C*-algebra} of $\B$ is then defined to be the C*-algebra $\ca_\la(\B):=\la_\B(\ca(\B))$. We note that the $*$-algebra $C_c(\B)$ is dense in $\ca(\B)$. Moreover, $\la_{\B}$ is injective on $C_c(\B)$ and $\la_\B(C_c(\B))$ is dense in $\ca_\la(\B)$, see \cite[Section~2]{EN02} for a proof of these facts.

In \cite[Proposition 2.10]{EN02} it is shown that if $\B$ is a Fell bundle over a locally compact Hausdorff group $\fH$, then the C*-algebra $\ca_{\la}(\B)$ admits a reduced coaction \[
\de_\B^r \colon \ca_\la(\B) \to \M(\ca_\la(\B)\otimes \ca_\la(\fH)).
\]
The non-degenerate $*$-homomorphism $\de_\B^r$ extends to an injective strictly continuous unital $*$-homomorphism $\ol{\de_\B^r} \colon \M(\ca_\la(\B)) \to \M(\ca_\la(\B)\otimes \ca_\la(\fH))$,  and satisfies
\[
\ol{\de_\B^r}(\ol{\la_{\B}}(b_{\fh}))
=
\ol{\la_{\B}}(b_{\fh})\otimes\la_{\fH}(\fh)
\foral  b_{\fh}\in\B_{\fh} \text{ and all } \fh\in \fH,
\]
where $b_{\fh}$ is considered inside $\M(\ca(\B))$ by \cite[Lemma~1.1]{EN02}.

\subsection{Reduced crossed products}\label{Ss:Cross}
We briefly discuss the properties of the reduced crossed product by a locally compact Hausdorff group that we are going to need later on. For more details see \cite[Chapter 7]{Wil07} and \cite[Appendix A]{EKQR06}.

Once again, let $\fH$ be a locally compact Hausdorff group with unit $e_{\fH}$, the left Haar measure $\mu$, and $\De$ the modular function of $\fH$.
Let $\C$ be a C*-algebra and $\al \colon \fH \to \Aut(\C)$ be a point-norm continuous action. We call the triple $(\C,\fH,\al)$ a \emph{C*-dynamical system}. The space 
$C_c(\fH,\C)$ of continuous $\C$-valued functions on $\fH$ with compact support becomes a $*$-algebra  by setting
\[
(f \ast h)(\fg')
:=
\int_{\fH}f(\fg)\al_{\fg}(h(\fg^{-1}\fg')) \, d\mu(\fg)
\foral f,h\in C_c(\fH,\C),
\]
and
\[
f^*(\fg)
:=
\De(\fg)^{-1}\al_{\fg}(f(\fg^{-1})^*) \foral f\in C_c(\fH,\C).
\]

We denote the \emph{reduced crossed product} by $\C\rtimes_{\al,\la}\fH$.
See \cite[Definition~7.7 and Lemma~7.8]{Wil07} or
\cite[Appendix~A, Definition~A.13 and Remark~A.15]{EKQR06} for its
construction.  By \cite[Appendix~A, Definition~A.13]{EKQR06}, there is a
canonical covariant pair $(i_{\C}^r,i_{\fH}^r)$, where
\[
i_{\C}^r\colon\C\to\M(\C\rtimes_{\al,\la}\fH)
\]
is a non-degenerate injective $*$-homomorphism and
\[
i_{\fH}^r\colon\fH\to\U(\M(\C\rtimes_{\al,\la}\fH))
\]
is a strictly continuous injective group homomorphism into the unitary group $\U(\M(\C\rtimes_{\al,\la}\fH))$ of $\M(\C\rtimes_{\al,\la}\fH)$, satisfying
\[
i_{\fH}^r(\fh)i_{\C}^r(c)i_{\fH}^r(\fh)^*
=
i_{\C}^r(\al_{\fh}(c))
\foral c\in\C \text{ and } \fh\in\fH.
\]

The integrated form 
\[
f
\longmapsto
(i_{\C}^r\times i_{\fH}^r)(f)
:=
\int_{\fH} i^r_{\C}(f(\fh)) i_\fH^r(\fh) \, d\mu(\fh)
\]
of $(i_{\C}^r,i_{\fH}^r)$ is injective on $C_c(\fH,\C)$, and hence it identifies $C_c(\fH,\C)$ with a dense
$*$-subalgebra of $\C\rtimes_{\al,\la}\fH$.
We suppress the use of the above map and regard $C_c(\fH,\C)$ as a dense $*$-subalgebra of $\C\rtimes_{\al,\la}\fH$.
Moreover, note that since $i_{\C}^r$ is non-degenerate, it extends uniquely to a strictly continuous injective unital $*$-homomorphism
\[
\ol{i_{\C}^r}\colon\M(\C)\to\M(\C\rtimes_{\al,\la}\fH).
\]

We use the standard identification of the reduced crossed product of a C*-dynamical system, with the reduced cross-sectional C*-algebra of the corresponding semidirect-product Fell bundle.  Namely, if $(\C,\fH,\al)$ is  a C*-dynamical system, then the \emph{semidirect-product Fell bundle} associated with $(\C,\fH,\al)$ is defined as
\[
\B^\al:=\{\B^\al_{\fh}\}_{\fh\in\fH} \text{ where } \B^\al_{\fh}:=\C\times\{\fh\} \foral \fh \in \fH,
\]
with the operations
\[
(b_{\fh},\fh)(c_{\fg},\fg):=(b_{\fh}\al_{\fh}(c_{\fg}),\fh\fg)
\qand
(b_{\fh},\fh)^*:=(\al_{\fh^{-1}}(b_{\fh}^*),\fh^{-1}).
\]
There exists a canonical $*$-isomorphism 
\[
\Phi \colon \ca_\la(\B^\al) \to \C\rtimes_{\al,\la}\fH.
\]
See for example \cite[Section~2]{EN02}.

Recall that we have a reduced coaction $\de_{\B^\al}^r \colon \ca_\la(\B^\al) \to \M(\ca_\la(\B^\al) \otimes \ca_\la(\fH))$. Moreover, we also have a reduced coaction $\de^{\rtimes} \colon \C\rtimes_{\al,\la}\fH \to \M(\C\rtimes_{\al,\la}\fH \otimes \ca_\la(\fH))$ called the \emph{reduced dual coaction} that satisfies
\[
\ol{\de^\rtimes}(i_\C^r(c))= i_\C^r(c) \otimes 1_{\M(\ca_\la(\fH))} 
\qand 
\ol{\de^\rtimes}(i_\fH^r(\fh))= i_\fH^r(\fh) \otimes \la_\fH(\fh);
\]
see \cite[Pages 254--256]{Lan79}.
In particular, we have $i_\C^r(\C) \subseteq \M^{\rm fix}_{\de^{\rtimes}}$, and
the $*$-isomorphism $\Phi$ intertwines the coactions $\de^\rtimes$ and $\de_{\B^\al}^r$, that is, 
\[
\de^\rtimes \circ \Phi = \ol{\Phi \otimes \id_{\ca_\la(\fH)}} \circ \de_{\B^\al}^r.
\]
The proof follows by an adaptation of the proof of \cite[Proposition 3.5]{KMQW10}.

We close this section with a short discussion on crossed products of operator algebras, following the work of Katsoulis and Ramsey \cite{KR19}.
We recall only the parts that will be needed for the reduced Hao--Ng isomorphism problem.

Suppose that $\fH$ is a locally compact Hausdorff group acting point-norm continuously on an operator algebra $\A$ via completely isometric automorphisms.
This action admits an extension to an action $\dot{\al}$ on the C*-envelope of $\A$, which allows us to construct the reduced crossed product $\cenv(\A) \rtimes_{\dot\al, \la} \fH$.
By identifying $\A$ with its copy inside $\cenv(\A)$, the \emph{reduced crossed product} $\A \rtimes_{\al, \la} \fH$ is defined to be the norm-closed subalgebra of $\cenv(\A) \rtimes_{\dot{\al}, \la} \fH$ generated by the compactly supported continuous functions $C_c(\fH,\A)$, see \cite[Definition 3.17]{KR19}.
A central question in this theory is whether this inclusion induces a canonical $*$-isomorphism with the C*-envelope of $\A \rtimes_{\al, \la} \fH$, or in other words, whether
\begin{equation*}
\cenv(\A \rtimes_{\al, \la} \fH) \stackrel{?}{\simeq} \cenv(\A) \rtimes_{\dot\al, \la} \fH.
\end{equation*}
Recently, Dor-On and Thompson \cite[Theorem~4.4]{DT26} gave an affirmative answer when $\A$ admits a contractive approximate unit, thus generalising the results obtained in \cite[Theorem~2.5]{Kat17}, \cite[Theorem~3.23]{KR19}, and the first part of \cite[Theorem~3.6]{KR21}.

\section{Weights and the averaging map}

\subsection{Weights on C*-algebras}

We briefly review the theory of weights on C*-algebras. The standard reference for lower semi-continuous weights is \cite{Com68}. We follow the notation of \cite{KV99}.

Let $\C$ be a C*-algebra.  A \emph{weight} on
$\C$ is a map $\om \colon \C_+\to[0,\infty]$ which is additive and positively
homogeneous.  
We set
\[
\M_\om^+ := \{c\in \C_+:\om(c)<\infty\},
\]
for the \emph{positive integrable domain} of $\om$.
From the additivity of $\om$ and since its range lies in $[0,\infty]$ we get that if $c,d\in \C_+$ are such that $c\leq d$, then $\om(c)\leq \om(d)$. Hence $\M_\om^+$ is a \emph{hereditary cone} in $\C_+$, i.e., if $c\in \C_+$ and $d\in \M_\om^+$ with $c\leq d$, then $c\in \M_\om^+$.
Furthermore, we set
\[
\N_\om := \{c\in \C:\om(c^*c)<\infty\}
\qand 
\M_\om:=\spn \M_{\om}^+,
\]
for the \emph{square-integrable domain} and the \emph{integrable domain} of $\om$, respectively.
It follows that $\N_\om$ is a left
ideal in $\C$ and that $\M_\om$ is a $*$-subalgebra of $\C$. Moreover, we have 
\[
\M_\om=\spn\N_\om^* \N_\om
\qand 
\M_\om\cap \C_+=\M_\om^+.
\]
It follows that $\om|_{\M_{\om}^+}$ has a unique
linear extension to $\M_\om$, which we still denote by $\om$. See \cite[Lemma~5.1.2]{Ped79} for a proof of these facts.

A weight $\om$ is called \emph{faithful} if whenever $\om(c)=0$ for some $c\in \C_+$ then $c=0$.  It is called \emph{lower semi-continuous} if
$\{c\in \C_+:\om(c)\leq r\}$ is norm-closed for every $r\geq 0$ and \emph{densely defined} if $\N_\om$ is dense in $\C$. A weight that is non-zero, lower semi-continuous, and densely defined is called \emph{proper}.

For a proper weight $\om$ we set 
\[
\F_\om:= \{ \rho \in \C^d: \rho \text{ is positive and } \rho(c) \leq \om(c) \foral c\in \C_+\},
\]
and
\[
\G_{\om}:= \{\be \rho : \be \in (0,1) \text{ and } \rho \in \F_{\om}\}.
\]
Then $\G_{\om}$ is a directed set with respect to the natural order defined by positivity of  functionals on $\C^d$, see \cite[Proposition 3.5]{Kus97A}.
In particular, in \cite[Proposition 1.7]{Com68} it is shown that 
\[
\om(c)= \sup\{ \rho(c) : \rho\in \F_\om\}=\lim_{\rho \in \G_\om} \rho(c) \foral c\in \C_+.
\]
The weight $\om$ has a canonical extension to the positive cone of the multiplier algebra $\M(\C)_+$, see  \cite[Discussion prior to Lemma~2.4]{Kus97A}.
For $x\in \M(\C)_+$ we set 
\[
\ol{\om}(x):= \sup\{ \ol{\rho} (x) : \rho\in \F_\om\} \in [0,\infty],
\]
where $\ol{\rho}$ denotes the unique strictly continuous extension of $\rho$ to $\M(\C)$. It follows that
\[
\ol{\om}(x)= \sup\{ \ol{\rho} (x) : \rho\in \F_\om\}= \sup\{ \ol{\rho} (x) : \rho\in \G_\om\}= \lim_{\rho \in \G_\om} \ol{\rho}(x) \foral x\in \M(\C)_+.
\]
In particular, the map $\ol{\om}$ is additive and positively homogeneous, and hence a weight.

\begin{proposition}\label{P:faithext}
Let $\om$ be a faithful proper weight on a C*-algebra $\C$. Then $\ol{\om}$ is faithful.
\end{proposition}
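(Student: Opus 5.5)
Take $x\in\M(\C)_+$ with $\ol{\om}(x)=0$; we show $x=0$ by transferring the problem to the positive elements $c^*xc\in\C_+$, where faithfulness of $\om$ applies, and then using that $\C$ is essential in $\M(\C)$.

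First fix $c\in\C$. Then $c^*xc\in\C_+$, and $c^*xc\le \|c\|^2 x$ in $\M(\C)$. Since $\ol{\om}$ is a weight on $\M(\C)_+$, it is additive, hence monotone (the same argument as for $\om$ in the text). Moreover, for $y\in\C_+$ we have $\ol{\om}(y)=\sup_{\rho\in\F_\om}\ol{\rho}(y)=\sup_{\rho\in\F_\om}\rho(y)=\om(y)$ by \cite[Proposition 1.7]{Com68}. Hence
\[
\om(c^*xc)=\ol{\om}(c^*xc)\le \|c\|^2\,\ol{\om}(x)=0 .
\]
Faithfulness of $\om$ gives $c^*xc=0$, that is, $(x^{1/2}c)^*(x^{1/2}c)=0$, so $x^{1/2}c=0$ for every $c\in\C$. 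Since $\C$ is an essential ideal of $\M(\C)$, this forces $x^{1/2}=0$, and therefore $x=0$.

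The only delicate point is making sure the two facts used about $\ol{\om}$ really hold. The first is that $\ol{\om}$ extends $\om$; this is immediate because $\ol{\rho}|_{\C}=\rho$, combined with Combes' formula. The second is monotonicity of $\ol{\om}$ on $\M(\C)_+$. This is also clear directly: each $\ol{\rho}$ is a positive functional on $\M(\C)$, being the strict extension of a positive $\rho$, so $\ol{\rho}(c^*xc)\le\|c\|^2\ol{\rho}(x)$ for each $\rho$, and taking suprema gives the inequality. No lower semicontinuity or density argument is needed beyond what is already recorded in the text.
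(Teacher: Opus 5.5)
\textbf{The gap.} Your key inequality $c^*xc\le\|c\|^2x$ is false in general, and so is its functional version $\ol{\rho}(c^*xc)\le\|c\|^2\ol{\rho}(x)$. Conjugating $x$ by $c$ is not dominated by $x$ itself.

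For a counterexample, take $\C=M_2(\bC)$, so that $\M(\C)=\C$, with $x=e_{11}$ and $c=e_{12}$. Then $c^*xc=e_{22}$, which is not $\le\|c\|^2x=e_{11}$. Now consider the faithful positive functional $\om(y)=y_{11}+2y_{22}$; it is a faithful proper weight. It gives $\om(c^*xc)=2>1=\|c\|^2\om(x)$.

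What is true is $c^*xc\le\|x\|\,c^*c$, but that does not involve $\ol{\om}(x)$. For a trace you could pass from $c^*xc$ to $x^{1/2}cc^*x^{1/2}$, but a general weight is not invariant in this way. So your route gives no control of $\om(c^*xc)$ in terms of $\ol{\om}(x)$, and the deduction $c^*xc=0$ does not follow.

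\textbf{The fix.} Put $x^{1/2}$ on the outside instead. Since $c^*c\le\|c\|^2 1$, conjugating by $x^{1/2}$ gives
\[
0\le x^{1/2}c^*cx^{1/2}\le\|c\|^2x .
\]
Moreover $x^{1/2}c^*cx^{1/2}=(cx^{1/2})^*(cx^{1/2})$ lies in $\C_+$, because $\C$ is an ideal in $\M(\C)$.

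Your two observations then apply unchanged: $\ol{\om}$ agrees with $\om$ on $\C_+$ by Combes' formula, and $\ol{\om}$ is monotone as a supremum of the positive functionals $\ol{\rho}$. Together they give $\om(x^{1/2}c^*cx^{1/2})=0$. Faithfulness yields $cx^{1/2}=0$, i.e.\ $x^{1/2}c^*=0$ for every $c\in\C$. Essentiality then gives $x^{1/2}=0$, hence $x=0$.

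This corrected argument is exactly the paper's proof. The rest of your proposal (the extension property, monotonicity, and the essential-ideal step) is fine.
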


\begin{proof}
Consider $x \in \M(\C)_+$ such that $\ol{\om}(x)=0$. Fix $c\in \C$.  We have
\[
 0\leq x^{1/2}c^* c x^{1/2}\leq \|c\|^2 x,
\]
and therefore
\[
 0\leq\om(x^{1/2}c^* cx^{1/2})
 \leq \|c\|^2 \ol{\om}(x)=0.
\]
Faithfulness of $\om$ gives $x^{1/2} c^*c x^{1/2}=0$. In particular, we get $x^{1/2} c^*=0$. Since $c$ was arbitrary we obtain $x^{1/2}=0$ and hence $x=0$, as required.
\end{proof}

\subsection{The Plancherel weight}

For the remainder of this section, we fix a locally compact Hausdorff group $\fH$ with unit $e_{\fH}$ and left Haar measure $\mu$, and we let $\De$ denote its modular function.

Our main focus will be on the Plancherel weight defined on the positive cone of the reduced C*-algebra $\ca_{\la}(\fH)_+$; see \cite[Section~VII.2 and VII.3]{Tak03}  for more details.
The \emph{von Neumann algebra of the group} $\fH$ is defined as the bicommutant of the reduced group C*-algebra, i.e., $L(\fH):= \ca_{\la}(\fH)''$.  A vector $\xi\in L^2(\fH)$ is called \emph{left bounded} if the map
\[
C_c(\fH) \to L^2(\fH); \quad f \mapsto \xi \ast f
\]
extends to a bounded operator on $L^2(\fH)$, which we denote by $\la_{\fH}(\xi)$. Using for example \cite[Proposition VII.3.1]{Tak03} we get that if $\xi \in L^2(\fH)$ is left bounded, then $\la_{\fH}(\xi)\in L(\fH)$. We note that the use of the symbol $\la_{\fH}$ is justified by the fact that for $f\in C_c(\fH)\subseteq L^2(\fH)$ we have that $\la_{\fH}(f)$ is exactly the image of $f$ under the left regular representation $\la_{\fH} \colon \ca(\fH) \to \ca_\la(\fH)$.

The \emph{Plancherel weight} $\wt{\om}_{\fH} \colon L(\fH)_+ \to [0,\infty]$ is then defined 
by setting
\[
\wt{\om}_{\fH}(x):= 
\begin{cases}
\sca{\xi,\xi}_{L^2(\fH)} & \text{ if } x^{1/2}=\la_{\fH}(\xi) \text{ for some left bounded } \xi \in L^2(\fH),\\
\infty & \text{ otherwise.}
\end{cases}
\]
It follows that $\wt{\om}_{\fH}$ is a faithful weight and
\[
\N_{\wt{\om}_{\fH}}=\{ \la_{\fH}(\xi): \xi \in L^2(\fH) \text{ is left bounded}\}
\]
and
\[
\wt{\om}_{\fH}(\la_{\fH}(\xi)^*\la_{\fH}(\eta))
=\sca{\xi,\eta}_{L^2(\fH)} \text{ for left bounded }  \xi,\eta\in L^2(\fH).
\]
See \cite[Theorem~VII.2.5]{Tak03} and \cite[Definition VII.3.2]{Tak03} for a proof of the fact that $\wt{\om}_{\fH}$ is well-defined and a proof of the above assertions.

The \emph{C*-algebraic Plancherel weight}
$\om_{\fH}$ on $\ca_{\la}(\fH)$ is defined to be the restriction of $\wt{\om}_{\fH}$ to
$\ca_{\la}(\fH)_+$.
By definition we have 
\[
\N_{\om_{\fH}}=\ca_\la(\fH)\cap \N_{\wt{\om}_{\fH}}= \{ \la_{\fH}(\xi): \xi \in L^2(\fH) \text{ is left bounded and }\la_{\fH}(\xi)\in \ca_\la(\fH)\}.
\]

The weight $\om_{\fH}$ is densely defined since $\la_{\fH}(C_c(\fH)) \subseteq \N_{\om_{\fH}}$. Indeed, by  \cite[Proposition 2.40]{Fol16} we get that every $f\in C_c(\fH)$ is left bounded since
\[
\|f\ast h\|_2 \leq \|f\|_1 \|h\|_2 \foral h \in C_c(\fH).
\]
Moreover, we have $\la_{\fH}(f)\in \ca_\la(\fH)$ and hence $\la_{\fH}(f)\in\N_{\om_{\fH}}$. 
By \cite[Theorem~VII.1.11]{Tak03} we get that $\om_{\fH}$ is lower semi-continuous, and hence the weight $\om_\fH$ is proper.
We denote the extension of $\om_{\fH}$ to the multiplier algebra $\M(\ca_\la(\fH))$ by $\ol{\om}_{\fH}$. We then have
\[
\N_{\ol{\om}_{\fH}}=\N_{\wt{\om}_{\fH}} \cap \M(\ca_\la(\fH))=\{ \la_{\fH}(\xi): \xi \in L^2(\fH) \text{ is left bounded and }\la_{\fH}(\xi)\in \M(\ca_\la(\fH))\}.
\]
Note here that we used the fact that $\M(\ca_\la(\fH))\subseteq L(\fH)$, which follows by the Bicommutant Theorem and \cite[Proposition 3.12.3]{Ped79}, and hence $\ol{\om}_{\fH}=\wt{\om}_{\fH}|_{\M(\ca_\la(\fH))_+}$.

\subsection{Slice maps of the Plancherel weight}
Let $\C$ be a C*-algebra. For any $\rho \in \G_{\om_{\fH}}$ we consider the slice map 
\[
\id_\C \otimes \rho \colon \C \otimes \ca_\la(\fH) \to \C \text{ such that } c\otimes x \mapsto c \rho(x).
\]
This slice map admits a strictly continuous extension  
\[
\ol{\id_\C \otimes \rho} \colon \M(\C \otimes \ca_\la(\fH)) \to \M(\C),
\]
see for example \cite[Lemma~A.30]{EKQR06}.
We define the \emph{slice map of the Plancherel weight}
\[
(\id_{\C}\otimes \om_{\fH})(x):= \text{strict-}\lim_{\rho \in \G_{\om_{\fH}}} (\ol{\id_\C \otimes \rho})(x) \foral x\in \M_{\id_{\C}\otimes \om_{\fH}}^+,
\]
where $\M_{\id_{\C}\otimes \om_{\fH}}^+$ denotes the \emph{positive integrable domain} of $\id_{\C}\otimes \om_{\fH}$ defined as
\[
\M_{\id_{\C}\otimes \om_{\fH}}^+:= \{ x\in \M(\C \otimes \ca_\la(\fH))_+: \left((\ol{\id_\C \otimes \rho})(x)\right)_{\rho \in \G_{\om_{\fH}}} \text{ converges strictly in } \M(\C)\}.
\]
Moreover, we define the \emph{square-integrable domain} of $\id_{\C}\otimes \om_{\fH}$ to be
\[
\N_{\id_{\C}\otimes \om_{\fH}}:=\{ x\in \M(\C\otimes \ca_\la(\fH)): x^*x \in \M_{\id_{\C}\otimes \om_{\fH}}^+\},
\]
and the \emph{integrable domain} of $\id_{\C}\otimes \om_{\fH}$ to be
\[
\M_{\id_{\C}\otimes \om_{\fH}}:=\spn \M_{\id_{\C}\otimes \om_{\fH}}^+.
\]

It is proven in \cite[Result 3.6 and Notation 3.7]{KV99}  that the set $\M_{\id_{\C}\otimes \om_{\fH}}^+$ is a hereditary cone in $\M(\C\otimes \ca_\la(\fH))_+$, that $\M_{\id_{\C}\otimes \om_{\fH}}$ is a $*$-subalgebra of $\M(\C\otimes \ca_\la(\fH))$, and that $\N_{\id_{\C}\otimes \om_{\fH}}$ is a left ideal in $\M(\C\otimes \ca_\la(\fH))$. Moreover, we have
\[
\M_{\id_{\C}\otimes \om_{\fH}}=\spn (\N_{\id_{\C}\otimes \om_{\fH}})^* \N_{\id_{\C}\otimes \om_{\fH}}
\qand
\M_{\id_{\C}\otimes \om_{\fH}}^+= \M_{\id_{\C}\otimes \om_{\fH}} \cap \M(\C\otimes \ca_\la(\fH))_+,
\]
and the map $(\id_{\C}\otimes \om_{\fH})|_{\M_{\id_\C\otimes \om_{\fH}}^+}$ has a unique linear extension to $\M_{\id_{\C}\otimes \om_{\fH}}$, which we denote by the same symbol.
Also, for $x\in \M_{\id_{\C}\otimes \om_{\fH}}^+$ we have that $(\id_{\C}\otimes \om_{\fH})(x)$ is a positive element.

The following proposition is just an adaptation of the proof of \cite[Lemma~A.30]{EKQR06}. We include a proof for completeness.

\begin{proposition}\label{P:posslic}\cite[Lemma~A.30]{EKQR06}
Let $\C$ and $\D$ be C*-algebras and let $x$ be in $\M(\C\otimes \D)$. If $(\ol{\psi \otimes \id_{\D}})(x)=0$ for every positive functional $\psi \colon \C \to \bC$, then $x=0$.
\end{proposition}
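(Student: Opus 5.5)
To prove Proposition~\ref{P:posslic}, we reduce to the case of elements of $\C\otimes\D$ and then exploit that slices against positive functionals separate points of a minimal tensor product. Since $\C\otimes\D$ is an essential ideal in $\M(\C\otimes\D)$, it suffices to show that $x y=0$ for every $y\in \C\otimes\D$. We would use the special elements $y = c\otimes d$ and their adjoints. First, we verify the module property of the extended slice map: for $c_1,c_2\in\C$, $d\in\D$ and a positive functional $\psi$, define $\psi_{c_1,c_2}:=\psi(c_1^*\,\cdot\, c_2)$. Strict continuity of $\ol{\psi\otimes\id_\D}$, checked first on elementary tensors and then extended, gives
\[
(\ol{\psi\otimes\id_\D})\bigl((c_1^*\otimes 1)\,x\,(c_2\otimes d)\bigr)=(\psi_{c_1,c_2}\otimes\id_\D)\bigl(x(1\otimes d)\bigr)\cdot\text{(appropriately)}.
\]
More simply, we would note that every bounded functional on $\C$ is a linear combination of positive ones. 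The hypothesis therefore extends by linearity to $(\ol{\varphi\otimes\id_\D})(x)=0$ for all $\varphi\in\C^d$, including functionals of the form $\psi(c_1^*\,\cdot\,c_2)$.

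Next, for $c\in\C$ and $d\in\D$, consider $z:=x(c\otimes d)\in\C\otimes\D$. For any $\varphi\in\C^d$, set $\varphi_c:=\varphi(\,\cdot\, c)\in\C^d$. The key identity is
\[
(\varphi\otimes\id_\D)(z)=(\ol{\varphi_c\otimes\id_\D})(x)\,d .
\]
We would prove this by approximating $x$ strictly. Both sides are strictly continuous in $x$ by the construction in \cite[Lemma~A.30]{EKQR06}, and the identity holds trivially for $x=a\otimes b$ with $a\in\C$, $b\in\D$. Elements of $\C\otimes\D$ are strictly dense in $\M(\C\otimes\D)$. Since $\varphi_c$ is a bounded functional, the hypothesis (extended by linearity) gives $(\varphi\otimes\id_\D)(z)=0$ for all $\varphi\in\C^d$. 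Composing with any $\eta\in\D^d$ yields $(\varphi\otimes\eta)(z)=0$. Product functionals separate points of $\C\otimes\D$, which is the standard injectivity of $\C^d\odot\D^d\to(\C\otimes\D)^d$ for the minimal tensor product. Hence $z=0$.

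It follows that $x(c\otimes d)=0$ for all elementary tensors. By linearity and continuity, $xy=0$ for all $y\in\C\otimes\D$, and essentiality of $\C\otimes\D$ in $\M(\C\otimes\D)$ gives $x=0$.

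The main obstacle is the identity $(\varphi\otimes\id_\D)(x(c\otimes d))=(\ol{\varphi_c\otimes\id_\D})(x)\,d$. This requires care about how the strict extension of the slice map is defined. It is defined, as in \cite[Lemma~A.30]{EKQR06}, by factoring $\varphi=\varphi'(\,\cdot\, c')$ via Cohen factorisation and setting $(\ol{\varphi\otimes\id})(x)d=(\varphi'\otimes\id)(x(c'\otimes d))$. Once this defining formula is used directly, the identity is essentially built in, and the rest is routine.
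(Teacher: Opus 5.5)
Your argument is correct, but it takes a different route from the paper.

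\textbf{The paper's route.} It works spatially. It represents $\C\subseteq\B(H)$ and $\D\subseteq\B(K)$ non-degenerately and views $\M(\C\otimes\D)\subseteq\B(H\otimes K)$. It then uses only the vector states $\psi_\zeta=\langle\zeta,\cdot\,\zeta\rangle$, for which $\langle\zeta\otimes k_1,x(\zeta\otimes k_2)\rangle=\langle k_1,(\ol{\psi_\zeta\otimes\id_\D})(x)k_2\rangle$. Polarisation in $\zeta$ then kills every matrix coefficient of $x$ against elementary tensors, so $x=0$ directly in the multiplier algebra.

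\textbf{Your route.} You stay intrinsic:
\begin{itemize}
\item You use essentiality of $\C\otimes\D$ in $\M(\C\otimes\D)$ to reduce to $x(c\otimes d)\in\C\otimes\D$.
\item You move the slice across via $(\varphi\otimes\id_\D)(x(c\otimes d))=(\ol{\varphi(\cdot\,c)\otimes\id_\D})(x)\,d$. Strict approximation by a bounded net such as $x e_\lambda$ justifies this. It also uses linearity of $\varphi\mapsto\ol{\varphi\otimes\id_\D}$, which is immediate from uniqueness of the strict extension.
\item You finish with the fact that product functionals separate points of the minimal tensor product.
\end{itemize}

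\textbf{Comparison.} The functional $\varphi(\cdot\,c)$ is no longer positive, so you also need the decomposition of bounded functionals into positive ones. Moreover, the separation fact you quote is typically proved by essentially the vector-functional computation the paper performs. The paper's argument is therefore shorter, and it shows slightly more: vector states from one faithful non-degenerate representation of $\C$ already suffice. Your version has the merit of isolating the multiplier-algebra issue in a single module identity and otherwise working only inside $\C\otimes\D$.

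The opening computation with $\psi(c_1^*\,\cdot\,c_2)$ and the ``(appropriately)'' formula is a false start that your argument never uses. It should simply be deleted.
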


\begin{proof}
Consider $\C \subseteq \B(H)$ and $\D \subseteq \B(K)$ acting non-degenerately. Then we may assume that $\M(\C\otimes \D) \subseteq \B(H\otimes K)$.
For $\ze\in H$, define the positive functional
\[
\psi_\ze \colon \C \to \bC \text{ such that } \psi_\ze(c)=\sca{\ze,c \, \ze}
\foral c\in \C.
\]
By checking first on simple tensors and taking norm and then strict limits we get
\[
\sca{\ze\otimes k_1,y(\ze\otimes k_2)}
=
\sca{k_1,(\ol{\psi_{\ze} \otimes \id_{\D}})(y) k_2} \foral y\in \M(\C\otimes\D) \text{ and } \ze \in H \text{ and } k_1,k_2 \in K.
\]
In particular, we have
\[
\sca{\ze\otimes k_1,x (\ze\otimes k_2)}
=
\sca{k_1,(\ol{\psi_{\ze} \otimes \id_{\D}})(x) k_2}
=0
\foral \ze\in H \text{ and } k_1,k_2\in K.
\]
A polarisation argument then yields that $x=0$, as required.
\end{proof}

We will need the following proposition.

\begin{proposition}\label{P:faithslice}
If $x\in \M_{\id_{\C}\otimes \om_{\fH}}^+$ satisfies $(\id_{\C}\otimes \om_{\fH})(x)=0$, then $x=0$. 
\end{proposition}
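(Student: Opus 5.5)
The idea is to reduce to the scalar case by slicing the first leg with positive functionals, and then use faithfulness of $\ol{\om}_{\fH}$ (Proposition~\ref{P:faithext}) together with Proposition~\ref{P:posslic}.

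Fix $x\in \M_{\id_{\C}\otimes \om_{\fH}}^+$ with $(\id_{\C}\otimes \om_{\fH})(x)=0$, and fix a positive functional $\psi$ on $\C$. Let $\ol{\psi}$ be its strictly continuous extension to $\M(\C)$. Set
\[
y_\psi:=(\ol{\psi\otimes \id_{\ca_\la(\fH)}})(x)\in \M(\ca_\la(\fH))_+ ,
\]
which is positive because slice maps by positive functionals are completely positive.

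The key identity, for every $\rho\in \G_{\om_{\fH}}$, is
\[
\ol{\psi}\bigl((\ol{\id_\C\otimes \rho})(x)\bigr)=\ol{\rho}(y_\psi).
\]
Both sides are strictly continuous in $x$ and agree on simple tensors $c\otimes a$, where each equals $\psi(c)\rho(a)$. So the identity holds on $\C\otimes\ca_\la(\fH)$, and it extends to $\M(\C\otimes \ca_\la(\fH))$ by strict density.

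Now take the limit over $\rho\in\G_{\om_{\fH}}$. By definition of $\M_{\id_{\C}\otimes\om_{\fH}}^+$, the net $(\ol{\id_\C\otimes\rho})(x)$ converges strictly to $(\id_{\C}\otimes \om_{\fH})(x)=0$. Since $\ol{\psi}$ is strictly continuous on bounded sets, and the net is increasing and bounded by its limit, the left-hand side tends to $0$. The right-hand side tends to $\ol{\om}_{\fH}(y_\psi)$ by the limit formula for the extended weight recalled above. Hence $\ol{\om}_{\fH}(y_\psi)=0$.

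The point needing care is the boundedness of the net $(\ol{\id_\C\otimes\rho})(x)$, which justifies passing $\ol{\psi}$ through the limit. If boundedness is awkward to verify, I would instead argue directly by monotonicity. Since $\rho\le\om_{\fH}$ gives positivity of the differences, the increasing net of positive elements $(\ol{\id_\C\otimes\rho})(x)$ is dominated by its strict limit $0$. Each term is therefore $0$, and so $\ol{\rho}(y_\psi)=0$ for all $\rho$, which again gives $\ol{\om}_{\fH}(y_\psi)=0$.

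To finish, recall that $\om_{\fH}$ is faithful and proper, being the restriction of the faithful Plancherel weight. Proposition~\ref{P:faithext} then shows that $\ol{\om}_{\fH}$ is faithful, so $y_\psi=0$. As $\psi$ was an arbitrary positive functional on $\C$, Proposition~\ref{P:posslic} (applied with $\D=\ca_\la(\fH)$) yields $x=0$.
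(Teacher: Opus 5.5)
Your proof is correct and follows the paper's argument. Like the paper, you slice the first leg with a positive functional $\psi$, use faithfulness of $\ol{\om}_{\fH}$ (Proposition~\ref{P:faithext}) to get $(\ol{\psi\otimes\id_{\ca_\la(\fH)}})(x)=0$, and finish with Proposition~\ref{P:posslic}. The one difference is that the paper cites \cite[Proposition 3.9]{KV99} for $\ol{\om}_{\fH}((\ol{\psi\otimes\id_{\ca_\la(\fH)}})(x))=\ol{\psi}((\id_{\C}\otimes\om_{\fH})(x))$, whereas you prove the needed vanishing directly from the identity $\ol{\psi}\circ(\ol{\id_\C\otimes\rho})=\ol{\rho}\circ(\ol{\psi\otimes\id_{\ca_\la(\fH)}})$ and your monotonicity argument; that argument is valid, because the positive cone is strictly closed and so each term of the increasing net is dominated by its limit $0$.
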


\begin{proof}
Suppose that $\psi \colon \C \to \bC$ is a positive functional and denote by $\ol{\psi}$ its strictly continuous extension to $\M(\C)$. Consider now the positive map $\psi \otimes \id_{\ca_\la(\fH)}$ and its strictly continuous extension $\ol{\psi \otimes \id_{\ca_\la(\fH)}}$ to $\M(\C\otimes \ca_\la(\fH))$ which is also positive. Combining with \cite[Proposition 3.9]{KV99} we get  
\[
(\ol{\psi \otimes \id_{\ca_\la(\fH)}}) (x) \in \M_{\ol{\om}_{\fH}} \cap \M(\ca_\la(\fH))_+=\M_{\ol{\om}_{\fH}}^+,
\]
and also 
\[
\ol{\om}_{\fH}((\ol{\psi\otimes \id_{\ca_\la(\fH)}}) (x))=\ol{\psi}((\id_{\C}\otimes \om_{\fH})(x))=0.
\]
By Proposition \ref{P:faithext} we get $(\ol{\psi \otimes \id_{\ca_\la(\fH)}}) (x)=0$ and hence Proposition \ref{P:posslic} yields that $x=0$, as required.
\end{proof}

\subsection{The averaging map}\label{Ss:Aver}

Let $\C$ be a C*-algebra that admits a reduced coaction $\de^r$ by $\fH$ and consider its unique strictly continuous extension to an injective unital $*$-homomorphism 
\[
\ol{\de^r}\colon \M(\C)\to \M(\C\otimes \ca_\la(\fH)).
\]

Following \cite{BM09, Bus10} we say that a positive element $x\in \M(\C)_+$ is \emph{integrable} if $\ol{\de^r}(x)\in \M_{\id_{\C}\otimes \om_{\fH}}^+$. We define \emph{the positive integrable domain of $\de^r$} as
\[
\M_{\de^r}^+:=\{ x\in \M(\C)_+: \ol{\de^r}(x) \in \M_{\id_{\C}\otimes \om_{\fH}}^+\},
\]
the \emph{square-integrable domain of $\de^r$} as
\[
\N_{\de^r}:=\{ x\in \M(\C): x^*x \in \M_{\de^r}^+\},
\]
and the \emph{integrable domain of $\de^r$} as
\[
\M_{\de^r}:= \spn \M_{\de^r}^+.
\]

The following lemma is remarked in \cite[Section~3.2]{BM09}.
We include a proof for completeness.

\begin{lemma}\cite[Section~3.2]{BM09}\label{L:properties}
With the aforementioned notation $\M_{\de^r}^+$ is a hereditary cone in $\M(\C)_+$, and $\M_{\de^r}$ is a $*$-subalgebra of $\M(\C)$. Moreover, $\N_{\de^r}$ is a left ideal in $\M(\C)$ and  
\[
\M_{\de^r}= \spn \N_{\de^r}^*\N_{\de^r} \qand
\M_{\de^r}^+=\M_{\de^r}\cap \M(\C)_+.
\]
\end{lemma}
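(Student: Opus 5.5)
The plan is to pull everything back from the corresponding properties of the slice map $\id_{\C}\otimes\om_{\fH}$ recorded just before Proposition~\ref{P:posslic} (from \cite[Result 3.6 and Notation 3.7]{KV99}). The only ingredient besides these is that $\ol{\de^r}\colon \M(\C)\to\M(\C\otimes\ca_\la(\fH))$ is an injective unital $*$-homomorphism. In particular, $\ol{\de^r}$ is positive, order preserving, and satisfies $\ol{\de^r}(x)\geq 0$ if and only if $x\geq 0$. The last equivalence uses injectivity: an injective $*$-homomorphism is isometric and its image is a C*-subalgebra, so spectra are preserved.

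\emph{Hereditary cone.} If $x,y\in\M_{\de^r}^+$ and $t\geq0$, then $\ol{\de^r}(x+ty)=\ol{\de^r}(x)+t\ol{\de^r}(y)$ lies in $\M^+_{\id_{\C}\otimes\om_{\fH}}$, since the latter is a cone. If $0\leq y\leq x$ with $x\in\M_{\de^r}^+$, then $0\leq\ol{\de^r}(y)\leq\ol{\de^r}(x)$. Heredity of $\M^+_{\id_{\C}\otimes\om_{\fH}}$ then gives $y\in\M_{\de^r}^+$.

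\emph{Left ideal and the span identity.} From here on the argument follows the standard proof for weights, \cite[Lemma~5.1.2]{Ped79}, which uses only that $\M^+$ is a hereditary cone. For $x,y\in\N_{\de^r}$ we have $(x+y)^*(x+y)\leq 2(x^*x+y^*y)$, so $\N_{\de^r}$ is a subspace. For $z\in\M(\C)$ we have $(zx)^*(zx)\leq\|z\|^2x^*x$, so it is a left ideal. Polarisation, $y^*x=\frac14\sum_{k=0}^3 i^k(x+i^ky)^*(x+i^ky)$, gives $\N_{\de^r}^*\N_{\de^r}\subseteq\M_{\de^r}$. Conversely, if $x\in\M^+_{\de^r}$ then $x^{1/2}\in\N_{\de^r}$, so $x=(x^{1/2})^*x^{1/2}$. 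Together these yield $\M_{\de^r}=\spn\N_{\de^r}^*\N_{\de^r}$. Since $\N_{\de^r}$ is a left ideal, $\N_{\de^r}^*\N_{\de^r}\cdot\N_{\de^r}^*\N_{\de^r}\subseteq\N_{\de^r}^*\N_{\de^r}$. Hence $\M_{\de^r}$ is closed under multiplication, and it is clearly $*$-closed, so it is a $*$-subalgebra.

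\emph{Positive part.} I expect the identity $\M_{\de^r}\cap\M(\C)_+=\M^+_{\de^r}$ to be the main point, and I would do it in one of two ways.

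The first is to transport through $\ol{\de^r}$. We have $\ol{\de^r}(\M_{\de^r})\subseteq\M_{\id_{\C}\otimes\om_{\fH}}$. So if $x\in\M_{\de^r}$ is positive, then $\ol{\de^r}(x)$ lies in $\M_{\id_{\C}\otimes\om_{\fH}}\cap\M(\C\otimes\ca_\la(\fH))_+=\M^+_{\id_{\C}\otimes\om_{\fH}}$, which gives $x\in\M^+_{\de^r}$ directly.

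Alternatively, one can argue intrinsically, as in Pedersen. Write a self-adjoint $x\in\M_{\de^r}$ as $a-b$ with $a,b\in\M^+_{\de^r}$. This is possible because real and imaginary parts of combinations of positives are again differences of positives. Then $0\leq x\leq a$, and heredity gives $x\in\M^+_{\de^r}$.

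The second route avoids any reliance on the external identity and is the one I would write. The reverse inclusion $\M^+_{\de^r}\subseteq\M_{\de^r}\cap\M(\C)_+$ is trivial.
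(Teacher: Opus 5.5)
Your proof is correct. The hereditary-cone step (pulling back through the unital $*$-homomorphism $\ol{\de^r}$) and the subspace, polarisation, span and $*$-subalgebra arguments are the same as in the paper. The difference lies in how much you borrow from \cite{KV99}.

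The paper obtains the left-ideal property of $\N_{\de^r}$ by pulling back the fact that $\N_{\id_{\C}\otimes\om_{\fH}}$ is a left ideal. It proves $\M_{\de^r}\cap\M(\C)_+\subseteq\M^+_{\de^r}$ exactly by your first route: it uses $\ol{\de^r}(\M_{\de^r})\subseteq\M_{\id_{\C}\otimes\om_{\fH}}$ together with the identity $\M_{\id_{\C}\otimes\om_{\fH}}\cap\M(\C\otimes\ca_\la(\fH))_+=\M^+_{\id_{\C}\otimes\om_{\fH}}$.

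You instead derive the left-ideal property from $x^*z^*zx\leq\|z\|^2x^*x$ and heredity. You get the positive part from writing a self-adjoint $x\in\M_{\de^r}$ as $a-b$ with $a,b\in\M^+_{\de^r}$ and then applying heredity. In your version, the only input needed from the slice-map theory is that $\M^+_{\id_{\C}\otimes\om_{\fH}}$ is a hereditary cone. The rest of the lemma then follows formally from that, just as in \cite[Lemma~5.1.2]{Ped79}. The paper's version uses more of the \cite{KV99} package but is slightly shorter.

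Incidentally, your polarisation identity, with $y^*x$ on the left, is the correct one. The paper writes $x^*y$ there, which is harmless because $x,y\in\N_{\de^r}$ are arbitrary.
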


\begin{proof}
First note that since $\ol{\de^r}$ is a $*$-homomorphism we get that  $\M_{\de^r}^+$ is a hereditary cone in $\M(\C)_+$ as $\M_{\id_{\C}\otimes \om_{\fH}}^+$ is a hereditary cone in $\M(\C\otimes \ca_\la(\fH))_+$. Similarly, $\N_{\de^r}$ is a left ideal in $\M(\C)$ as $\N_{\id_{\C}\otimes \om_{\fH}}$ is a left ideal in $\M(\C\otimes \ca_\la(\fH))$. Using the inequality
\[
(x+ \be y)^* (x+\be y) \leq 2 x^* x+ 2 |\be|^2 y^*y \text{ for } x,y\in \N_{\de^r} \text{ and }\be \in \bC,
\]
combined with the fact that $\M_{\de^r}^+$ is a hereditary cone, yields that $\N_{\de^r}$ is a linear subspace. For $x \in \M_{\de^r}^+$ we have $x^{1/2}\in \N_{\de^r}$, and hence 
\[
x=(x^{1/2})^* x^{1/2} \in \spn\N_{\de^r}^* \N_{\de^r},
\]
showing that $\M_{\de^r} \subseteq \spn \N_{\de^r}^*\N_{\de^r}$. For the converse inclusion pick $x,y\in \N_{\de^r}$. By polarisation we get
\[
x^*y=\frac{1}{4} \sum_{k=0}^3 i^{k}(x+i^k y)^*(x+i^k y) \in \spn \M_{\de^r}^+=\M_{\de^r},
\]
and thus
\[
\M_{\de^r}=\spn \N_{\de^r}^* \N_{\de^r}.
\]
We have that $\M_{\de^r}$ is obviously $*$-closed. Using the fact that $\N_{\de^r}$ is a left ideal yields that $\M_{\de^r}$ is closed under multiplication, and hence it is a $*$-subalgebra of $\M(\C)$. 
We have already shown that if $x \in \M_{\de^r}^+$, then $x\in \spn \N_{\de^r}^*\N_{\de^r}= \M_{\de^r}$ and hence $\M_{\de^r}^+\subseteq \M_{\de^r}\cap \M(\C)_+$. On the other hand, if $x \in \M_{\de^r}\cap \M(\C)_+$, then 
\[
\ol{\de^r}(x) \in \M(\C\otimes \ca_\la(\fH))_+ \qand \ol{\de^r}(x)
\in
\M_{\id_{\C}\otimes \om_{\fH}}. 
\]
In particular, we have
\[
\ol{\de^r}(x)
\in
\M_{\id_{\C}\otimes \om_{\fH}}
\cap
\M(\C\otimes \ca_\la(\fH))_+
=
\M_{\id_{\C}\otimes \om_{\fH}}^+,
\]
and hence $x\in \M_{\de^r}^+$.
\end{proof}

Following \cite[Section~3.2]{Bus10} the \emph{averaging map (with respect to $\de^r$)} is defined to be the map
\[
\E_{\de^r}\colon \M_{\de^r} \to \M(\C); \quad x \mapsto (\id_{\C}\otimes \om_{\fH})(\ol{\de^r}(x)).
\]
Note that $\E_{\de^r}$ is linear as $\id_\C\otimes\om_{\fH}$ is linear on $\M_{\id_\C\otimes\om_{\fH}}$.
Now fix a C*-dynamical system $(\C,\fH,\al)$, and recall the reduced dual coaction $\de^\rtimes$ by $\fH$ on the reduced crossed product $\C\rtimes_{\al,\la} \fH$ defined in Section~\ref{Ss:Cross}.
In \cite[Corollary 3.12]{Bus10} it is shown that
\begin{equation}\label{Eq:Buss}
C_c(\fH,\C)\subseteq\N_{\de^{\rtimes}},
\end{equation}
and that for $f,h \in C_c(\fH, \C)$ we have
\begin{equation}\label{Eq:avercomp}
\E_{\de^{\rtimes}}(f^* \ast h)= i_\C^r\left((f^*\ast h)(e_{\fH})\right)= i_\C^r\left(
\int_{\fH}
\al_{\fg^{-1}}\big(f(\fg)^*h(\fg)\big) d\mu(\fg)\right).
\end{equation}
In particular, we obtain
\[
\E_{\de^{\rtimes}}(f^* \ast h) \in i_\C^r(\C) \qand \E_{\de^{\rtimes}}(f^* \ast f)\in i_\C^r(\C_+).
\]

We note that \cite[Corollary 3.12]{Bus10} is stated in the more general context of reduced cross-sectional C*-algebras of Fell bundles. Here we use the semidirect-product Fell bundle picture and the canonical $*$-isomorphism $\Phi \colon \ca_\la(\B^\al) \to \C\rtimes_{\al,\la}\fH$ that intertwines the reduced coactions in order to translate \cite[Corollary 3.12]{Bus10} into the reduced crossed product setting. See the discussion in Section~\ref{Ss:Cross}.

\begin{lemma}\label{L:Bim}
With the aforementioned notation, the following hold:
\begin{enumerate}
\item If $c,d\in \C$ and $x\in\M_{\de^{\rtimes}}$, then $i_\C^r (c)x i_\C^r (d)\in \M_{\de^{\rtimes}}$, and 
\[
\E_{\de^{\rtimes}}(i_\C^r (c)x i_\C^r (d))=i_\C^r(c) \E_{\de^{\rtimes}}(x) i_\C^r(d).
\]
\item If $x\in\N_{\de^{\rtimes}}$ and $\E_{\de^{\rtimes}}(x^*x)=0$, then $x=0$.
\end{enumerate}
\end{lemma}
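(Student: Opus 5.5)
For item (1), the plan rests on the fact that $i_\C^r(\C)$ lies in the fixed-point algebra $\M^{\rm fix}_{\de^\rtimes}$. Hence, for $c,d\in\C$ and $y\in\M(\C\rtimes_{\al,\la}\fH)$,
\[
\ol{\de^\rtimes}(i_\C^r(c)\,y\,i_\C^r(d))=(i_\C^r(c)\otimes 1)\,\ol{\de^\rtimes}(y)\,(i_\C^r(d)\otimes 1).
\]
By linearity of $\E_{\de^\rtimes}$ and Lemma~\ref{L:properties}, it suffices to treat $x=y^*z$ with $y,z\in\N_{\de^\rtimes}$. Write $i_\C^r(c)y^*z\,i_\C^r(d)=(y\,i_\C^r(c^*))^*(z\,i_\C^r(d))$, so it is enough to show two things. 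First, $\N_{\de^\rtimes}$ is stable under right multiplication by $i_\C^r(\C)$. Second, for $X:=\ol{\de^\rtimes}(y)$, $Z:=\ol{\de^\rtimes}(z)$ and $a=i_\C^r(c^*)\otimes 1$, $b=i_\C^r(d)\otimes 1$, we have $(Xa)^*(Zb)\in\M_{\id\otimes\om_\fH}$ and
\[
(\id\otimes\om_\fH)((Xa)^*Zb)=a^*\,(\id\otimes\om_\fH)(X^*Z)\,b.
\]

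At the level of each slice $\ol{\id\otimes\rho}$, for $\rho\in\G_{\om_\fH}$, this bimodule identity holds because $\ol{\id\otimes\rho}$ is a $\M(\C\rtimes_{\al,\la}\fH)\otimes 1$-bimodule map. One checks this on simple tensors and extends by strict continuity. Taking strict limits, left and right multiplication by fixed multipliers are strictly continuous, which gives convergence of $\ol{\id\otimes\rho}(a^*X^*Zb)$ whenever $\ol{\id\otimes\rho}(X^*Z)$ converges. For the positive case this applies directly to $(Xa)^*(Xa)=a^*X^*Xa$, which gives $y\,i_\C^r(c^*)\in\N_{\de^\rtimes}$. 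The general case follows by polarisation, using that the extension of $\id\otimes\om_\fH$ to $\M_{\id\otimes\om_\fH}$ is linear. I expect this to be the only delicate point: one must make sure that strict convergence of the net for the positive elements $x^*x$ yields membership of $a^*X^*Xa$ in the positive integrable domain $\M^+_{\id\otimes\om_\fH}$. This is immediate from the definition, which is phrased precisely as strict convergence of the net.

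For item (2), take $x\in\N_{\de^\rtimes}$ with $\E_{\de^\rtimes}(x^*x)=0$. Then $\ol{\de^\rtimes}(x^*x)\in\M^+_{\id\otimes\om_\fH}$ and $(\id\otimes\om_\fH)(\ol{\de^\rtimes}(x^*x))=0$. Proposition~\ref{P:faithslice} gives $\ol{\de^\rtimes}(x^*x)=0$. Injectivity of $\ol{\de^\rtimes}$, as the extension of the injective non-degenerate coaction, then yields $x^*x=0$, hence $x=0$.
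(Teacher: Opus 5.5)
Your proof is correct. Item (2) is exactly the paper's argument: Proposition~\ref{P:faithslice} followed by injectivity of $\ol{\de^{\rtimes}}$.

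For item (1) the paper does no computation. It notes that $i_\C^r(\C)\subseteq\M^{\rm fix}_{\de^{\rtimes}}$ and cites \cite[Proposition~3.15 (ii) and (iii)]{Bus10}, which supplies the bimodule property of $\M_{\de^{\rtimes}}$ and of $\E_{\de^{\rtimes}}$ over fixed-point multipliers. You instead verify this directly from the definitions as set up in the paper. You use the per-slice identity $\ol{\id\otimes\rho}((a\otimes 1)y(b\otimes 1))=a\,\ol{\id\otimes\rho}(y)\,b$ for each $\rho\in\G_{\om_\fH}$, and the strict continuity of multiplication by a fixed multiplier. You also use the fact that $\M^+_{\id\otimes\om_\fH}$ is defined through strict convergence of the slice net.

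Your route gives a self-contained argument. It does not depend on matching the paper's definitions of $\M_{\de^r}$ and $\E_{\de^r}$ with Buss's, and it visibly works for any $a,b\in\M^{\rm fix}_{\de^{\rtimes}}$, not only for $i_\C^r(\C)$. The paper's route is shorter.

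One small point about phrasing. For the off-diagonal term $(Xa)^*(Zb)$ with $a\neq b$, polarisation does not reduce you to the positive identity $a^*W^*Wa$. What you actually use is the following: since each slice $\ol{\id\otimes\rho}$ is linear and $\M_{\id\otimes\om_\fH}$ is spanned by its positive part, the net $\ol{\id\otimes\rho}(w)$ converges strictly to $(\id\otimes\om_\fH)(w)$ for every $w\in\M_{\id\otimes\om_\fH}$. Apply this to both $X^*Z$ and $(Xa)^*(Zb)$, which lie in $\M_{\id\otimes\om_\fH}$ by the right-ideal step you proved. Combined with the per-slice bimodule identity and uniqueness of strict limits, this gives the formula.
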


\begin{proof}
To prove item (i) recall that $i_\C^r(\C) \subseteq \M^{\rm fix}_{\de^{\rtimes}}$ and apply \cite[Proposition 3.15 (ii) and (iii)]{Bus10}.
For item (ii) note that if $\E_{\de^{\rtimes}}(x^*x)=0$, then
\[
(\id_\C\otimes\om_{\fH})(\ol{\de^{\rtimes}}(x^* x))= \E_{\de^{\rtimes}}(x^*x) =0.
\]
Proposition \ref{P:faithslice} and injectivity of $\ol{\de^{\rtimes}}$ imply that $x^*x=0$, and hence $x=0$.
\end{proof}

\section{The main result}

\subsection{Product systems and their representations}

In his seminal paper, Fowler \cite{Fow02} formalised product systems, motivated by the works of Nica \cite{Nic92}, Pimsner \cite{Pim97}, as well as his joint work with Raeburn \cite{FR98}. We will work in a more general framework of concrete product systems, as considered in \cite{KP25}. For a more detailed description of concrete product systems, alongside a justification of how they generalise the product systems originally formulated in \cite{Fow02}, see \cite[Section~2]{KP25}.

Let $P$ be a unital subsemigroup of a discrete group $G$ with unit $e_P=e_G$.
We say that a family $X = \{X_p\}_{p \in P}$ of closed operator spaces in a common $\B(H)$ is a \emph{(concrete) product system} if the following are satisfied:
\begin{enumerate}
\item $A := X_{e_P}$ is a C*-algebra (called the \emph{coefficient algebra of $X$});
\item $X_p \cdot X_q \subseteq X_{pq}$ for all $p, q \in P$;
\item $X_p^* \cdot X_{pq} \subseteq X_q$ for all $p, q \in P$,
\end{enumerate}
where $\cdot$ means operator multiplication in $\B(H)$.
Uniqueness of $q \in P$ in item (iii) follows since $P$ is left-cancellative.
If, in addition, $[A\cdot X_p]=X_p$ for every $p\in P$, then $X$ is called \emph{non-degenerate}.

The Fock representation of a product system was first considered in \cite{Fow02}. See also \cite{KP25} for the definition of the Fock representation in the context of concrete product systems. Consider the \emph{Fock space} $\F X := \sum^\oplus_{r \in P} X_r$, as a right Hilbert $A$-module.
For every $\xi_p \in X_p$, we define the \emph{left creation operator}
\[
\la_p(\xi_p)\colon \F X \to \F X \text{ such that } \la_p(\xi_p) \eta_r = \xi_p \cdot \eta_r \text{ for } \eta_r \in X_r \subseteq \F X.
\]
It follows that each  $\la_p(\xi_p)$ is adjointable and 
\[
\la_p(\xi_p)^* \eta_s =
\begin{cases}
\xi_p^* \cdot \eta_s & \text{if } s \in pP, \\
0 & \text{if } s \notin pP
\end{cases}
\quad \text{ for } \eta_s \in X_s \subseteq \F X.
\]
We write $\la \colon X \to \L(\F X)$ for this map, and we refer to $\la=\{\la_p\}_{p\in P}$ as the \emph{Fock representation of $X$}.
We write $\T_\la(X)$ for the \emph{Fock C*-algebra of $X$} defined as $\ca(\la_p(X_p) \mid p \in P)$.

A \emph{representation $t = \{t_p\}_{p \in P}$ of $X$} is a family of linear maps $t_p \colon X_p \to \B(K)$ such that:
\begin{enumerate}
\item $t_{e_P}$ is a $*$-representation of $A := X_{e_P}$;
\item $t_p(\xi_p) t_q(\xi_q) = t_{pq}(\xi_p \xi_q)$ for all $\xi_p \in X_p$ and $\xi_q \in X_q$;
\item $t_p(\xi_p)^* t_{pq}(\xi_{pq}) = t_q(\xi_p^* \xi_{pq})$ for all $\xi_p \in X_p$ and $\xi_{pq} \in X_{pq}$.
\end{enumerate}
A representation $t$ is called \emph{injective} if $t_{e_P}$ is injective on $A$.
It follows that the Fock representation is an injective representation of $X$.
The \emph{Toeplitz algebra $\T(X)$ of $X$} is the universal C*-algebra generated by the \emph{universal representation $\wh{t}$ of $X$} with respect to representations of $X$.
Let $t \colon X \to \B(K)$ be a representation of $X$, then there is an induced $*$-representation
\[
t_\ast \colon \T(X) \to \B(K); \quad \wh{t}_p(\xi_p)\mapsto t_p(\xi_p),
\]
and we write
\[
\ca(t) := t_\ast(\T(X)) = \ca(t_p(X_p) \mid p \in P).
\]

Following \cite{Li12}, for a set $Z \subseteq P$ and $p \in P$ we write
\[
pZ:=\{px \mid x \in Z\}
\qand
p^{-1}Z := \{y \in P \mid py \in Z\}.
\]
We write $\J$ for the smallest family of right ideals of $P$ containing $P$ and $\mt$ that is closed under left multiplication, taking pre-images under left multiplication (in the sense above) and finite intersections.
The right ideals of $P$ in $\J$ are called \emph{constructible}.
By the proof of \cite[Lemma~3.3]{Li12} the set of constructible ideals is automatically closed under finite intersections and hence
\[
\J = \left\{ q_n^{-1} p_n \dots q_1^{-1} p_1 P \mid n \in \bN; p_i, q_i \in P \right\} \cup \{\mt\}.
\]

A representation $t = \{t_p\}_{p \in P}$ of $X$ is called \emph{equivariant} if there exists a $*$-homomorphism
\[
\de \colon \ca(t) \to \ca(t) \otimes \ca(G) ; \quad  t_p(\xi_p) \mapsto t_p(\xi_p) \otimes u_{G}(p).
\]
It follows that $\de$ is injective with a left inverse given by the map $\id \otimes \chi$ where $\chi \colon \ca(G)\to \bC$ is the character obtained from the trivial unitary representation of $G$ by universality.
Moreover, $\de$ satisfies the coaction identity and hence $\ca(t)$ admits a coaction by $G$.
For simplicity, we say that \emph{$t$ admits a coaction by $G$} if such a $\de$ exists. 
In this case, the $g$-spectral subspace $[\ca(t)]_g$, for $g \in G$, is the closed linear span of the elements
\begin{align*}
 t_{p_1}(X_{p_1})^* t_{q_1}(X_{q_1}) \cdots t_{p_n}(X_{p_n})^*t_{q_n} (X_{q_n})
\text{ such that } 
p_1^{-1} q_1 \cdots p_n^{-1} q_n = g \text{ and } n \in \bN.
\end{align*}
A proof for this realisation of the $g$-spectral space can be found in \cite[Lemma~2.2]{Seh19} for non-degenerate product systems. Similar arguments give the conclusion in the general case. In \cite[Lemma~2.2]{Seh19} it is shown that $\wh{t}$ admits a coaction by $G$ and in \cite[Proposition~4.1]{DKKLL22} it is shown that $\la$ admits a normal coaction by $G$.

Let $t$ be a representation of $X$.
For $\bo{x} \in \J$ we define the \emph{$\bo{K}$-core on $\bo{x}$} of $\ca(t)$ to be the closed linear span of the spaces
\[
t_{p_1}(X_{p_1})^*  t_{q_1}(X_{q_1}) \cdots t_{p_n}(X_{p_n})^* 
t_{q_n} (X_{q_n})
\]
for any $n\in\bN$ and $p_1, q_1, \dots, p_n, q_n \in P$ that satisfy
\[
p_1^{-1} q_1 \cdots p_n^{-1} q_n = e_P
\text{ and }
q_n^{-1} p_n \dots q_1^{-1} p_1 P = \bo{x}.
\]
We write $\bo{K}_{\bo{x}, t_\ast}$ for this closed space.
We note that we do not require that $\bo{K}_{\mt, t_{\ast}} = (0)$.
Each $\bo{K}_{\bo{x}, t_\ast}$ is a C*-subalgebra of $\ca(t)$ satisfying $\bo{K}_{\bo{x}, t_\ast}=t_\ast(\bo{K}_{\bo{x}, \wh{t}_\ast})$, and moreover, it follows that  
\[
\bo{K}_{\bo{x}, t_\ast} \cdot \bo{K}_{\bo{y}, t_\ast} \subseteq \bo{K}_{\bo{x}\cap \bo{y}, t_\ast} \foral \bo{x}, \bo{y} \in \J.
\]

\subsection{Fock covariant and strong covariant representations}
We now review the key elements on Fock covariant and strong covariant representations. We recall only the parts that will be needed for the reduced Hao--Ng isomorphism problem.
For more details see \cite{DKKLL22, KP25, Seh19, Seh22}.

Let $P$ be a unital subsemigroup of a discrete group $G$ and let $X$ be a product system over $P$. In \cite[Proposition~4.1]{DKKLL22} it is shown that $\la$ admits a normal coaction and hence we obtain a Fell bundle
\[
\F\C X := \Big\{ [\T_\la(X)]_g \Big\}_{g \in G},
\]
consisting of the spectral subspaces of this coaction.
We call $\F\C X$ the \emph{Fock covariant} bundle of $X$. In particular, we have 
\[
\T_\la(X) \simeq \ca_\la(\F\C X),
\]
and we also set 
\[
\T_{\cov}^{\fock}(X):=\ca(\F\C X).
\]
Moreover, $\T_{\cov}^{\fock}(X)$ is a quotient of $\T(X)$ that inherits a coaction by $G$.
A \emph{Fock covariant representation $t$ of $X$} is a representation of $X$ for which the induced $*$-representation $t_\ast$ factors through $\T_{\cov}^{\fock}(X)$. It is easy to see that the Fock representation $\la$ is a Fock covariant representation.

In \cite[Theorem~3.2]{KP25} Kakariadis and the author proved the following characterisation for injective equivariant Fock covariant representations. Even though it is not needed for our purposes in this work, we note that an inspection of the arguments in the proof of \cite[Theorem~3.2]{KP25} shows that equivariance is neither used nor required.

\begin{theorem}\cite[Theorem~3.2]{KP25}\label{T:Fockcov}
Let $X$ be a product system over a unital subsemigroup $P$ of a discrete group $G$. An injective (equivariant) representation $t$ of $X$ is Fock covariant if and only if $t$ satisfies the following conditions:
\begin{enumerate}
\item $\bo{K}_{\mt, t_\ast} = (0)$.
\item For any $\cap$-closed $\F=\{\bo{x}_1,\dots,\bo{x}_n\} \subseteq \J$ such that $\bigcup_{i=1}^n \bo{x}_i \neq \mt$, and any $b_{\bo{x}_i}\in \bo{K}_{\bo{x}_i,\wh{t}_\ast}$, with $i=1,\dots,n$, the following property holds:
\begin{center}
if $\sum\limits_{i: r\in \bo{x}_i}t_\ast(b_{\bo{x}_i})t_r(X_r)=(0)$ for all $r\in \bigcup_{i=1}^n \bo{x}_i$, then $\sum\limits_{i=1} ^n t_\ast(b_{\bo{x}_i})=0$.    
\end{center}
\end{enumerate}
\end{theorem}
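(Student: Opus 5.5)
The plan is to reduce Fock covariance to a statement about the fixed-point algebra, and then to compute everything on the Fock space fibre by fibre.

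\emph{Reduction to the unit fibre.} Since $\T_{\cov}^{\fock}(X)=\ca(\F\C X)$ is the full C*-algebra of the Fell bundle of spectral spaces of $\T_\la(X)$, the induced map $t_\ast$ factors through $\T_{\cov}^{\fock}(X)$ if and only if, for each $g\in G$, the assignment $\la_\ast(b)\mapsto t_\ast(b)$ on $[\T(X)]_g$ is well defined (and contractive). Granted well-definedness on each fibre, the maps assemble into a representation of the bundle $\F\C X$, which integrates by universality. For $b\in[\T(X)]_g$ the C*-identity gives $\|t_\ast(b)\|^2=\|t_\ast(b^*b)\|$ with $b^*b\in[\T(X)]_{e}$, and $\|\la_\ast(b)\|^2=\|\la_\ast(b^*b)\|$ in the same way. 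Hence it suffices to show that $\la_\ast(b)=0$ implies $t_\ast(b)=0$ for $b$ in the unit fibre $[\T(X)]_e$, which is the closed span of the $\bo{K}$-cores $\bo{K}_{\bo{x},\wh{t}_\ast}$. Using $\bo{K}_\bo{x}\cdot\bo{K}_\bo{y}\subseteq\bo{K}_{\bo{x}\cap\bo{y}}$, the unit fibre is the closure of an increasing union of C*-algebras $\sum_{\bo{x}\in\F}\bo{K}_{\bo{x},\wh{t}_\ast}$ over finite $\cap$-closed $\F\subseteq\J$. Injectivity of a $*$-homomorphism on each such algebra then passes to the closure, so it is enough to treat finite sums $b=\sum_i b_{\bo{x}_i}$.

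\emph{Key computation.} Let $w=\wh{t}_{p_1}(\xi_{p_1})^*\wh{t}_{q_1}(\eta_{q_1})\cdots\wh{t}_{p_n}(\xi_{p_n})^*\wh{t}_{q_n}(\eta_{q_n})$ be a word lying in $\bo{K}_{\bo{x}}$, and let $r\in\bo{x}=q_n^{-1}p_n\cdots q_1^{-1}p_1P$. Then, applying the word right to left to $t_r(\zeta_r)$, each step stays inside the relevant right ideal. Using relations (ii) and (iii) of a representation at each step, this gives
\[
t_\ast(w)\,t_r(\zeta_r)=t_r\big(\xi_{p_1}^*\eta_{q_1}\cdots\xi_{p_n}^*\eta_{q_n}\zeta_r\big),
\]
where the product on the right is taken in $\B(H)$ and lies in $X_r$. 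This formula holds for every representation, in particular for $\la$. Moreover, on the Fock space $\la_\ast(w)$ annihilates $X_r$ when $r\notin\bo{x}$, because some adjoint step meets an $s\notin pP$. Consequently $\la_\ast(b)=0$ if and only if, for every $r\in\bigcup_i\bo{x}_i$, we have $\sum_{i:\,r\in\bo{x}_i}\la_\ast(b_{\bo{x}_i})\la_r(X_r)=0$. By the displayed formula and the injectivity of $t$ (so $t_r$ is isometric on $X_r$), this holds if and only if $\sum_{i:\,r\in\bo{x}_i}t_\ast(b_{\bo{x}_i})t_r(X_r)=0$ for every such $r$.

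\emph{Both directions.} For necessity: $\la$ kills $\bo{K}_{\mt}$ by the annihilation observation, giving (i). If the hypothesis of (ii) holds, the computation gives $\la_\ast(b)=0$, and Fock covariance then forces $t_\ast(b)=0$. For sufficiency: given $\la_\ast(b)=0$, we may discard any $\bo{x}_i=\mt$ using (i). If the remaining union is nonempty, the computation turns $\la_\ast(b)=0$ into exactly the hypothesis of (ii), and (ii) yields $t_\ast(b)=0$.

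I expect the main obstacle to be the key computation for general words. One has to check carefully, by induction along the word, that $r\in\bo{x}$ guarantees that every intermediate element lies in $X_s$ with $s\in pP$ at each adjoint step, so that relation (iii) applies. One also has to handle the $\cap$-closedness bookkeeping when passing to the closure and to the $\bo{K}_{\mt}$ summand; equivariance does not enter.
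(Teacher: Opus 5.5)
Your proposal is correct. The paper does not prove this statement itself but quotes it from \cite{KP25}, and your argument follows the same lines as the proof there: reduce Fock covariance to $\ker\la_\ast\cap[\T(X)]_{e_G}\subseteq\ker t_\ast$, exhaust the unit fibre by the C*-algebras $\sum_{\bo{x}\in\F}\bo{K}_{\bo{x},\wh{t}_\ast}$ over finite $\cap$-closed $\F$, and compare $t$ with $\la$ on each $X_r$, using that $t_r$ and $\la_r$ are isometric. The bookkeeping point you flag is harmless: rather than discarding the $\bo{K}_{\mt}$ summand (which could break $\cap$-closedness), apply (ii) to $\F$ as it stands, since the $\mt$ term never occurs in the sums over $r\in\bigcup_i\bo{x}_i$; then (i) is only needed when $\F=\{\mt\}$.
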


We now move the discussion to strong covariant representations.
We write $A \times_X P$ for the \emph{strong covariant C*-algebra of $X$}.
It is shown in \cite{Seh19} that $A \times_X P$ inherits a coaction by $G$.
The \emph{strong covariant} bundle of $X$ was first considered in \cite{DKKLL22}, and it is defined as the Fell bundle
\[
\S\C X := \Big\{ [A \times_X P]_g \Big\}_{g \in G},
\]
consisting of the spectral subspaces of the coaction by $G$ on $A \times_X P$.
By universality there exists a canonical $*$-isomorphism $A \times_X P\simeq \ca(\S\C X)$.
We write $A \times_{X, \la} P$ for the reduced C*-algebra of $\S\C X$.
A \emph{strong covariant representation $t$ of $X$} is a representation of $X$ for which the induced $*$-representation $t_\ast$ factors through $A \times_X P$. In \cite[Theorem~3.10]{Seh19} it is shown that if $t$ is a strong covariant representation then $t$ is injective if and only if $t_\ast$ is injective on $[A \times_X P]_{e_P}$.

For a product system $X$ we write $\T_\la(X)^+$ for the \emph{tensor algebra of $X$}, i.e., for the norm-closed subalgebra generated by $\{\la_p(X_p)\}_{p \in P}$ inside $\T_\la(X)$.
In \cite[Theorem~5.1]{Seh22} Sehnem proved that the C*-envelope $\cenv(\T_\la(X)^+)$ admits a normal coaction by $G$ and thus there exists a canonical $*$-isomorphism 
\[
\cenv(\T_\la(X)^+) \simeq A \times_{X, \la} P.
\]

\subsection{The reduced Hao--Ng isomorphism problem}

Let $P$ be a unital subsemigroup of a discrete group $G$, and let $X$ be a
product system over $P$.  A \emph{generalised gauge action} of a locally compact Hausdorff group $\fH$ on $\T_\la(X)$ is a point-norm continuous homomorphism
$\al \colon \fH\to\Aut(\T_\la(X))$ satisfying
\[
\al_{\fh}(\la_p(X_p))=\la_p(X_p)
\foral p\in P\text{ and }\fh\in\fH.
\]
We write $e_P$ for the unit in $P\subseteq G$ and $e_\fH$ for the unit in $\fH$.

For each $p\in P$ define
\[
X_p\rtimes_{\al,\la}\fH
:=[C_c(\fH,\la_p(X_p))]
\subseteq \T_\la(X)\rtimes_{\al,\la}\fH.
\]
The family
\[
X\rtimes_{\al,\la}\fH:=\{X_p\rtimes_{\al,\la}\fH\}_{p\in P}
\]
defines a product system over $P$, with the reduced crossed product $A\rtimes_{\al,\la}\fH$ as its coefficient algebra; see for example
\cite[Section~3]{Kat20} and \cite[Lemma~7.11]{KR19}.  Let
\[
\io^\rtimes \colon X\rtimes_{\al,\la}\fH
\to \T_\la(X) \rtimes_{\al,\la}\fH
\]
denote the identity representation, and let $\wh{t}^\rtimes$ denote the universal
representation of $X\rtimes_{\al,\la}\fH$.

Let $\bo{x}$ be a constructible ideal in $\J$. Note that $\al_{\fh}(\la_p(X_p))=\la_p(X_p)$ for every $p\in P$ and hence $\al_{\fh}(\bo{K}_{\bo{x},\la_\ast}) =\bo{K}_{\bo{x},\la_\ast}$ for every $\fh \in \fH$. We set
\[
\bo{K}_{\bo{x},\la_\ast}\rtimes_{\al,\la}\fH:=[C_c(\fH,\bo{K}_{\bo{x},\la_\ast})] \subseteq \T_\la(X)\rtimes_{\al,\la}\fH.
\]

In order to make a distinction between the $\bo{K}$-cores of a representation of $X$ and the $\bo{K}$-cores of a representation of $X\rtimes_{\al,\la}\fH$, for a representation $t^\rtimes$ of $X\rtimes_{\al,\la}\fH$ we write $\bo{K}^\rtimes_{\bo{x},t^\rtimes_\ast}$ for the $\bo{K}$-core of the representation $t^\rtimes$ for a constructible ideal $\bo{x}\in \J$.

\begin{lemma}\label{L:kcores}
Let $\bo{x}$ be in $\J$. With the aforementioned notation, we have
\[
\bo{K}^\rtimes_{\bo{x},\io^\rtimes_\ast} \subseteq \bo{K}_{\bo{x},\la_\ast}\rtimes_{\al,\la}\fH.
\]
\end{lemma}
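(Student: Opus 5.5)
The plan is to reduce the inclusion to a statement about products of compactly supported functions. The $\bo{K}$-core $\bo{K}^\rtimes_{\bo{x},\io^\rtimes_\ast}$ is the closed linear span of products
\[
(X_{p_1}\rtimes_{\al,\la}\fH)^* (X_{q_1}\rtimes_{\al,\la}\fH) \cdots (X_{p_n}\rtimes_{\al,\la}\fH)^*(X_{q_n}\rtimes_{\al,\la}\fH)
\]
with $p_1^{-1}q_1\cdots p_n^{-1}q_n=e_P$ and $q_n^{-1}p_n\cdots q_1^{-1}p_1P=\bo{x}$. The target $\bo{K}_{\bo{x},\la_\ast}\rtimes_{\al,\la}\fH$ is a closed linear subspace. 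Each $X_p\rtimes_{\al,\la}\fH$ is the closure of $C_c(\fH,\la_p(X_p))$, and multiplication and adjoints are continuous. It therefore suffices to show that for $f_i\in C_c(\fH,\la_{p_i}(X_{p_i}))$ and $h_i\in C_c(\fH,\la_{q_i}(X_{q_i}))$ the product $f_1^*\ast h_1\ast\cdots\ast f_n^*\ast h_n$ lies in $[C_c(\fH,\bo{K}_{\bo{x},\la_\ast})]$.

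To handle this product, I would work with the larger spaces of functions taking values in mixed words. For a word $w=(p_1,q_1,\dots,p_n,q_n)$, let $W_w$ be the closed linear span in $\T_\la(X)$ of $\la_{p_1}(X_{p_1})^*\la_{q_1}(X_{q_1})\cdots\la_{p_n}(X_{p_n})^*\la_{q_n}(X_{q_n})$, and allow partial words as well. Since $\al_\fh(\la_p(X_p))=\la_p(X_p)$, we also have $\al_\fh(\la_p(X_p)^*)=\la_p(X_p)^*$, so every such $W_w$ is $\al$-invariant.

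\emph{Adjoints.} For $f\in C_c(\fH,\la_p(X_p))$, the formula $f^*(\fg)=\De(\fg)^{-1}\al_\fg(f(\fg^{-1})^*)$ shows that $f^*\in C_c(\fH,\la_p(X_p)^*)$.

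\emph{Convolution.} For $f\in C_c(\fH,V)$ and $h\in C_c(\fH,W)$ with $V,W$ closed $\al$-invariant subspaces, the formula $(f\ast h)(\fg')=\int f(\fg)\al_\fg(h(\fg^{-1}\fg'))\,d\mu(\fg)$ is a Bochner integral of a continuous compactly supported function with values in $[V\cdot W]$. Hence $f\ast h\in C_c(\fH,[V\cdot W])$, using that $[VW]$ is closed.

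Inducting along the word, the product lands in $C_c(\fH,W_w)$. By definition, when $w$ satisfies the two conditions, $W_w\subseteq\bo{K}_{\bo{x},\la_\ast}$, and hence $C_c(\fH,W_w)\subseteq C_c(\fH,\bo{K}_{\bo{x},\la_\ast})$. Taking closures completes the argument.

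The only delicate point is the identification of the abstract convolution and adjoint in $\T_\la(X)\rtimes_{\al,\la}\fH$ with the function formulas. This holds because $C_c(\fH,\T_\la(X))$ is a $*$-subalgebra with exactly those operations, so no approximation inside the crossed product is needed beyond continuity of the integrand and closedness of $W_w$. I expect no real obstacle: the lemma is a bookkeeping inclusion driven by the $\al$-invariance of the fibres.
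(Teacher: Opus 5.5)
Your proposal is correct and follows essentially the same route as the paper. Both arguments reduce, by density and continuity, to products $f_{p_1}^*\ast h_{q_1}\ast\cdots\ast f_{p_n}^*\ast h_{q_n}$ of functions in $C_c(\fH,\la_{p_i}(X_{p_i}))$ and $C_c(\fH,\la_{q_i}(X_{q_i}))$, and then use the $\al$-invariance of each $\la_p(X_p)$ together with the adjoint and convolution formulas to show inductively that such a product lies in $C_c(\fH,\bo{K}_{\bo{x},\la_\ast})$. Your separate treatment of adjoints and convolutions is just a more explicit version of the paper's combined computation of $f_{p_1}^*\ast h_{q_1}$.
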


\begin{proof}
It suffices to consider an element $c\in \bo{K}^\rtimes_{\bo{x},\io^\rtimes_\ast}$ such that
\[
c=\io^\rtimes_{p_1}(f_{p_1})^* \io^\rtimes_{q_1}(h_{q_1})\cdots \io^\rtimes_{p_n}(f_{p_n})^* \io^\rtimes_{q_n}(h_{q_n})=f_{p_1}^* \ast h_{q_1} \ast \cdots \ast f_{p_n}^* \ast h_{q_n},
\]
where $n\in \bN$ and $p_1, q_1, \dots, p_n, q_n \in P$ satisfy
\[
p_1^{-1} q_1 \cdots p_n^{-1} q_n = e_P
\text{ and }
q_n^{-1} p_n \dots q_1^{-1} p_1 P = \bo{x},
\]
and $f_{p_i} \in C_c(\fH,\la_{p_i}(X_{p_i}))$ and $h_{q_i}\in C_c(\fH,\la_{q_i}(X_{q_i}))$. Since each $\la_p(X_p)$ is $\al$-invariant we get
\[
(f_{p_1}^* \ast h_{q_1})(\fg')= \int_{\fH} \De(\fg)^{-1} \al_\fg(f_{p_1}(\fg^{-1})^*) \al_\fg(h_{q_1}(\fg^{-1} \fg')) \, d\mu(\fg) \in [\la_{p_1}(X_{p_1})^* \la_{q_1}(X_{q_1})].
\]
Inductively, we obtain
\[
c\in C_c(\fH,[\la_{p_1}(X_{p_1})^* \la_{q_1}(X_{q_1})\cdots \la_{p_n}(X_{p_n})^* \la_{q_n}(X_{q_n})])\subseteq C_c(\fH, \bo{K}_{\bo{x},\la_\ast}) \subseteq \bo{K}_{\bo{x},\la_\ast}\rtimes_{\al,\la}\fH.
\]
The linear span of elements of this form is dense in $ \bo{K}^\rtimes_{\bo{x},\io^\rtimes_\ast}$ and hence the proof is complete.
\end{proof}

For the remainder of this section we fix a non-degenerate product system $X$ and a generalised gauge action $\al$ by a locally compact Hausdorff group. To ease notation let us write
\[
j:=i^r_{\T_\la(X)} \colon \T_\la(X) \to \M(\T_\la(X)\rtimes_{\al,\la}\fH)
\]
for the canonical embedding. Let us also write $\de^{\rtimes}$ for the reduced coaction by $\fH$ on $\T_\la(X)\rtimes_{\al,\la}\fH$, and
\[
\E_{\de^{\rtimes}}\colon \M_{\de^{\rtimes}} \to \M(\T_\la(X)\rtimes_{\al,\la}\fH)
\]
for the averaging map with respect to $\de^\rtimes$.

Since $X$ is non-degenerate, if $(e_i)_i$ is a contractive approximate unit for $A$, then $\la_{e_P}(e_i)$ is a contractive approximate unit for $\T_\la(X)$. In particular, by \cite[Lemma~3.5]{KR19} we may pick appropriate functions $f_i \in C_c(\fH)$ such that the functions $u_i\in C_c(\fH,\la_{e_P}(A))$ defined as 
\begin{equation}\label{Eq:appunit}
u_i(\fh)=f_i(\fh)\la_{e_P}(e_i) \foral \fh\in \fH,
\end{equation}
form a contractive approximate unit for $\T_\la(X)\rtimes_{\al,\la}\fH$.

We will need the following lemma.

\begin{lemma}\label{L:local1}
With the aforementioned notation, for each fixed $u_i$ defined in \eqref{Eq:appunit} we have:
\begin{enumerate}
\item $u_i \in \N_{\de^\rtimes}$.
\item If $z\in\T_\la(X)\rtimes_{\al,\la}\fH$, then $z u_i \in \N_{\de^\rtimes}$.

\item If $z,w\in \T_\la(X)\rtimes_{\al,\la}\fH$, then
$(z u_i)^*(w u_i)\in\M_{\de^{\rtimes}}$ and
$\E_{\de^{\rtimes}}((z u_i)^*(w u_i))\in j(\T_\la(X))$.
In particular, the map
\[
\Theta_i \colon
(\T_\la(X)\rtimes_{\al,\la}\fH)\times
(\T_\la(X)\rtimes_{\al,\la}\fH)\to j(\T_\la(X))
\text{ such that }
\Theta_i(z,w)
:=
\E_{\de^{\rtimes}}((zu_i)^*(w u_i)),
\]
is sesquilinear and
$\|\Theta_i(z,w)\|
\leq
\|z\|\cdot \|w\|\cdot \|\E_{\de^\rtimes}(u_i^*u_i)\|$.

\end{enumerate}
\end{lemma}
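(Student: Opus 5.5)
For item (i) I would use \eqref{Eq:Buss} directly: each $u_i$ lies in $C_c(\fH,\la_{e_P}(A))\subseteq C_c(\fH,\T_\la(X))$, which is contained in $\N_{\de^\rtimes}$ by \cite[Corollary~3.12]{Bus10}.

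For item (ii), the naive move is to note that $\N_{\de^\rtimes}$ is a left ideal in $\M(\T_\la(X)\rtimes_{\al,\la}\fH)$ by Lemma~\ref{L:properties}; since $z$ is a multiplier and $u_i\in\N_{\de^\rtimes}$, this gives $zu_i\in\N_{\de^\rtimes}$ at once. I will also record the inequality
\[
(zu_i)^*(zu_i)=u_i^*z^*zu_i\leq \|z\|^2 u_i^*u_i,
\]
because it gives quantitative control on $\E_{\de^\rtimes}$ and is needed in (iii).

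For item (iii), membership $(zu_i)^*(wu_i)\in\M_{\de^\rtimes}$ follows from $\M_{\de^\rtimes}=\spn\N_{\de^\rtimes}^*\N_{\de^\rtimes}$ (Lemma~\ref{L:properties}) and (ii). Sesquilinearity of $\Theta_i$ follows from linearity of $\E_{\de^\rtimes}$ on $\M_{\de^\rtimes}$. The main step is showing the range lies in $j(\T_\la(X))$ and proving the norm bound. I would first treat $z,w\in C_c(\fH,\T_\la(X))$. Then $zu_i$ and $wu_i$ lie in $C_c(\fH,\T_\la(X))$, because convolution of compactly supported continuous functions stays in $C_c$. Hence \eqref{Eq:avercomp} gives $\Theta_i(z,w)=j\big(((zu_i)^*\ast(wu_i))(e_\fH)\big)\in j(\T_\la(X))$.

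For the bound I would prove a Cauchy--Schwarz inequality $\|\E(x^*y)\|^2\leq\|\E(x^*x)\|\,\|\E(y^*y)\|$ for $x,y\in\N_{\de^\rtimes}$. One route is that the pairing $(x,y)\mapsto\E_{\de^\rtimes}(x^*y)$ is a positive $\M(\C)$-valued sesquilinear form, and the standard Hilbert-module Cauchy--Schwarz argument applies to it. An alternative is to apply positive functionals $\psi$ and use the scalar Cauchy--Schwarz inequality for $\psi\circ\E$. Positivity and monotonicity of $\E_{\de^\rtimes}$ on $\M^+_{\de^\rtimes}$ come from the slice map being a strict limit of positive maps. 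Combined with $u_i^*z^*zu_i\leq\|z\|^2u_i^*u_i$, this yields
\[
\|\E(x^*x)\|\leq\|z\|^2\|\E(u_i^*u_i)\|
\]
for $x=zu_i$, and similarly for $w$, giving the stated bound.

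The hardest step is extending from compactly supported $z,w$ to arbitrary elements. I plan to take $z_n\to z$ and $w_n\to w$ in norm with $z_n,w_n\in C_c(\fH,\T_\la(X))$. Sesquilinearity together with the bound (which holds for all $z,w$ by the positivity argument, independent of density) makes $\Theta_i(z_n,w_n)$ Cauchy in norm, and $\Theta_i(z_n,w_n)\to\Theta_i(z,w)$. Since $j(\T_\la(X))$ is norm closed in the multiplier algebra, the limit stays in $j(\T_\la(X))$. The delicate point is justifying monotonicity $\E(a)\leq\E(b)$ for $0\leq a\leq b$ in $\M^+_{\de^\rtimes}$. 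I would get it from hereditariness of $\M^+_{\id\otimes\om_\fH}$ and additivity: $b-a\in\M^+_{\de^\rtimes}$, so $\E(b)=\E(a)+\E(b-a)$ with $\E(b-a)\geq0$.
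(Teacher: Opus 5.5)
Your outline is the paper's proof. Item (i) comes from \eqref{Eq:Buss} and item (ii) from the left-ideal property of $\N_{\de^\rtimes}$. In (iii) you use membership via $\M_{\de^\rtimes}=\spn\N_{\de^\rtimes}^*\N_{\de^\rtimes}$, the range for compactly supported $z,w$ via \eqref{Eq:avercomp}, the estimate $u_i^*z^*zu_i\leq\|z\|^2u_i^*u_i$ with monotonicity of $\E_{\de^\rtimes}$, and a density argument (run it with nets, since $\T_\la(X)\rtimes_{\al,\la}\fH$ need not be separable).

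The one step you justify yourself instead of citing is the Cauchy--Schwarz inequality $\|\E_{\de^\rtimes}(x^*y)\|^2\leq\|\E_{\de^\rtimes}(x^*x)\|\,\|\E_{\de^\rtimes}(y^*y)\|$. Neither of your justifications proves it.

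\begin{itemize}
\item Positivity of the form $(x,y)\mapsto\E_{\de^\rtimes}(x^*y)$ is not enough for the Hilbert-module proof. That proof expands $\sca{xa-y,xa-y}\geq 0$ with $a=\sca{x,y}$, so it needs $\E_{\de^\rtimes}((xa)^*y)=a^*\E_{\de^\rtimes}(x^*y)$. This identity is available for $a\in\M^{\rm fix}_{\de^\rtimes}$, but not for an arbitrary multiplier $a$.
\item Your second route applies states $\psi$ and the scalar Cauchy--Schwarz inequality. This only bounds $|\psi(\E_{\de^\rtimes}(x^*y))|$, i.e.\ the numerical radius of the non-selfadjoint element $\E_{\de^\rtimes}(x^*y)$. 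So it gives the estimate only up to an extra factor $2$.
\end{itemize}

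The factor-$2$ loss genuinely occurs for merely positive forms. On $M_2(\bC)$ the map $\Phi(a)=\tfrac12\mathrm{tr}(a)I+(a-\mathrm{diag}(a))$ is positive. For the matrix units $x=E_{11}$, $y=E_{12}$, the form $\phi(x,y)=\Phi(x^*y)$ has $\|\phi(x,y)\|=1$, while $\|\phi(x,x)\|=\|\phi(y,y)\|=\tfrac12$. So as written you obtain (iii) with the bound $2\|z\|\,\|w\|\,\|\E_{\de^\rtimes}(u_i^*u_i)\|$. That suffices for the density step and the range statement, but not for the asserted constant.

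The missing ingredient is $2$-positivity of the slice maps. For $x,y\in\N_{\de^\rtimes}$, the $2\times 2$ matrix with entries $\ol{\de^\rtimes}(s)^*\ol{\de^\rtimes}(t)$, $s,t\in\{x,y\}$, is positive. Apply the completely positive maps $\id_{M_2}\otimes(\ol{\id\otimes\rho})$ and pass to the strict limit over $\rho\in\G_{\om_\fH}$. This gives
\[
\begin{pmatrix}\E_{\de^\rtimes}(x^*x)&\E_{\de^\rtimes}(x^*y)\\ \E_{\de^\rtimes}(y^*x)&\E_{\de^\rtimes}(y^*y)\end{pmatrix}\geq 0,
\]
and a positive block matrix $\begin{pmatrix}a&b\\ b^*&c\end{pmatrix}$ satisfies $\|b\|^2\leq\|a\|\,\|c\|$. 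This is the content of \cite[Proposition~3.15]{KV99}, which the paper simply cites.

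Alternatively, you can keep your factor-$2$ bound to get continuity of $\Theta_i$ and the range statement for all $z,w$. Then run the Hilbert-module argument with $a=\Theta_i(z,w)\in j(\T_\la(X))\subseteq\M^{\rm fix}_{\de^\rtimes}$; the needed bimodularity of $\E_{\de^\rtimes}$ is \cite[Proposition~3.15]{Bus10}, as in Lemma~\ref{L:Bim}(i).

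The rest of your proposal is correct, including the monotonicity argument via hereditariness.
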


\begin{proof}
Item (i) follows from \eqref{Eq:Buss} since $u_i \in C_c(\fH,\la_{e_P}(A)) \subseteq C_c(\fH,\T_\la(X))$,
and item (ii) follows from the fact that 
$\N_{\de^{\rtimes}}$ is a left ideal in $\M(\T_\la(X)\rtimes_{\al,\la}\fH)$.

We now prove item (iii). 
Fix $z,w\in\T_\la(X)\rtimes_{\al,\la}\fH$. Since
$\M_{\de^{\rtimes}}=\spn\N_{\de^{\rtimes}}^*\N_{\de^{\rtimes}}$, item (ii) implies that $(z u_i)^*(w u_i)\in\M_{\de^{\rtimes}}$. Linearity of $\E_{\de^{\rtimes}}$ on $\M_{\de^{\rtimes}}$ shows that $\Theta_i$ is
sesquilinear.
Note that 
\[
0\leq u_i^* z^* z u_i
\leq \|z\|^2 u_i^* u_i,
\]
and therefore, since $\M_{\de^{\rtimes}}^+$ is a hereditary cone and $\E_{\de^{\rtimes}}$ is positive on $\M_{\de^\rtimes}^+$, we get
\[
0\leq
\E_{\de^{\rtimes}}((zu_i)^*(zu_i))
\leq
\|z\|^2 \E_{\de^\rtimes}(u_i^*u_i).
\]
Applying the Cauchy--Schwarz inequality of
\cite[Proposition~3.15]{KV99} to
\(\de^{\rtimes}(zu_i)\) and \(\de^{\rtimes}(wu_i)\) yields
\[
\begin{aligned}
\|\E_{\de^{\rtimes}}((zu_i)^*(wu_i))\|^2
&\leq
\|\E_{\de^{\rtimes}}((zu_i)^*(zu_i))\|\cdot
\|\E_{\de^{\rtimes}}((wu_i)^*(wu_i))\|\\
&\leq
\|z\|^2 \cdot \|w\|^2 \cdot \|\E_{\de^\rtimes}(u_i^*u_i)\|^2.
\end{aligned}
\]

It remains to prove that the range of $\Theta_i$ is a subset of $j(\T_\la(X))$. If
$z,w\in C_c(\fH,\T_\la(X))$, then 
\[
z \ast u_i,w \ast u_i\in C_c(\fH,\T_\la(X)),
\]
and hence \eqref{Eq:Buss} and \eqref{Eq:avercomp} give
\[
\Theta_i(z,w)
=
\E_{\de^{\rtimes}}((z \ast u_i)^* \ast (w \ast u_i))
\in j(\T_\la(X)).
\]
For arbitrary $z,w\in\T_\la(X)\rtimes_{\al,\la}\fH$ choose nets
$z_\nu,w_{\nu'}\in C_c(\fH,\T_\la(X))$ with $z_\nu \xrightarrow{\nu}z$ and $w_{\nu'} \xrightarrow{\nu'} w$ in norm. Note that $\|w_{\nu'}\| \to \|w\|$ and thus
\[
\|\Theta_i(z_\nu,w_{\nu'})-\Theta_i(z,w)\|
\leq
\|z_\nu-z\|\cdot \|w_{\nu'}\|\cdot \|\E_{\de^\rtimes}(u_i^*u_i)\|
+\|z\|\cdot \|w_{\nu'}-w\|\cdot \|\E_{\de^\rtimes}(u_i^*u_i)\| \xrightarrow{\nu,\nu'} 0,
\]
which completes the proof.
\end{proof}

Recall that for constructible ideals $\bo{x}, \bo{y}\in \J$ we have $[\bo{K}_{\bo{x},\la_\ast} \cdot \bo{K}_{\bo{y},\la_\ast}]\subseteq \bo{K}_{\bo{x}\cap \bo{y},\la_\ast}$.

\begin{lemma}\label{L:invar}
Let $\bo{x}, \bo{y} \in \J$ be constructible ideals of $P$. With the aforementioned notation, for each fixed $u_i$ defined in \eqref{Eq:appunit} we have that if $z\in \bo{K}_{\bo{x},\la_\ast}\rtimes_{\al,\la}\fH$ and
$w\in \bo{K}_{\bo{y},\la_\ast}\rtimes_{\al,\la}\fH$, then
$\E_{\de^{\rtimes}}((z u_i)^*(w u_i))\in j(\bo{K}_{\bo{x}\cap \bo{y},\la_\ast})$.
\end{lemma}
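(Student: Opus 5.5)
The plan is to mirror the proof of the range statement in Lemma~\ref{L:local1}(iii): first handle compactly supported functions by an explicit computation, then pass to limits.

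\emph{Step 1: the dense case.} Take $z\in C_c(\fH,\bo{K}_{\bo{x},\la_\ast})$ and $w\in C_c(\fH,\bo{K}_{\bo{y},\la_\ast})$. Since $u_i(\fh)=f_i(\fh)\la_{e_P}(e_i)$ with $\la_{e_P}(e_i)\in\la_{e_P}(A)$, we have $\bo{K}_{\bo{x},\la_\ast}\la_{e_P}(A)\subseteq\bo{K}_{\bo{x},\la_\ast}$; indeed $\la_{e_P}(A)=\bo{K}_{P,\la_\ast}$ and $\bo{x}\cap P=\bo{x}$. The $\al$-invariance of the cores, already observed before Lemma~\ref{L:kcores}, then gives
\[
(z\ast u_i)(\fg')=\int_\fH z(\fg)\al_\fg(u_i(\fg^{-1}\fg'))\,d\mu(\fg)\in \bo{K}_{\bo{x},\la_\ast}.
\]
Hence $z\ast u_i\in C_c(\fH,\bo{K}_{\bo{x},\la_\ast})$, and likewise $w\ast u_i\in C_c(\fH,\bo{K}_{\bo{y},\la_\ast})$. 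By \eqref{Eq:avercomp},
\[
\E_{\de^\rtimes}((z u_i)^*(w u_i))=j\Big(\int_\fH \al_{\fg^{-1}}\big((z\ast u_i)(\fg)^*(w\ast u_i)(\fg)\big)\,d\mu(\fg)\Big).
\]
The integrand is pointwise in $\al_{\fg^{-1}}(\bo{K}_{\bo{x},\la_\ast}\bo{K}_{\bo{y},\la_\ast})\subseteq\bo{K}_{\bo{x}\cap\bo{y},\la_\ast}$. Here I use that $\bo{K}_{\bo{x},\la_\ast}^*=\bo{K}_{\bo{x},\la_\ast}$ because it is a C*-subalgebra, together with the product inclusion recalled above and $\al$-invariance. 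The integrand is also norm-continuous with compact support, so the Bochner integral lies in the closed subspace $\bo{K}_{\bo{x}\cap\bo{y},\la_\ast}$.

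\emph{Step 2: density.} For general $z\in\bo{K}_{\bo{x},\la_\ast}\rtimes_{\al,\la}\fH=[C_c(\fH,\bo{K}_{\bo{x},\la_\ast})]$, first note that the closed linear span equals the norm closure of $C_c(\fH,\bo{K}_{\bo{x},\la_\ast})$, since the latter is a linear space. So I choose nets $z_\nu\to z$ and $w_{\nu'}\to w$ from the compactly supported pieces. The bound $\|\Theta_i(z,w)\|\le\|z\|\|w\|\|\E_{\de^\rtimes}(u_i^*u_i)\|$ of Lemma~\ref{L:local1}(iii), combined with sesquilinearity, gives $\Theta_i(z_\nu,w_{\nu'})\to\Theta_i(z,w)$ in norm, exactly as in that proof. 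Since $j$ is an injective $*$-homomorphism, $j(\bo{K}_{\bo{x}\cap\bo{y},\la_\ast})$ is norm closed, so the limit stays inside it.

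\emph{Main obstacle.} I expect the hard step to be the rigorous justification in Step 1 that $z\ast u_i$ is again a compactly supported continuous function valued in the core. This requires the convolution integral, taken in $\T_\la(X)$, to land in the closed subspace, which holds because the Bochner integral of a continuous compactly supported function valued in a closed subspace stays in that subspace. I also need to make sure the identification of the product $zu_i$ in the crossed product with the convolution $z\ast u_i$ is compatible with the formula \eqref{Eq:avercomp}. The remaining verifications are routine.
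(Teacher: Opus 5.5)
Your proposal is correct and follows the paper's proof essentially step for step: the paper also first treats $z\in C_c(\fH,\bo{K}_{\bo{x},\la_\ast})$ and $w\in C_c(\fH,\bo{K}_{\bo{y},\la_\ast})$, shows that $z\ast u_i$ and $w\ast u_i$ remain core-valued using $\al_{\fh}(\la_{e_P}(A))=\la_{e_P}(A)$ and $\bo{K}_{\bo{x},\la_\ast}\cdot\la_{e_P}(A)\subseteq\bo{K}_{\bo{x},\la_\ast}$, and applies \eqref{Eq:avercomp}; it then passes to general $z,w$ by the norm estimate of Lemma~\ref{L:local1}(iii). Your side remarks (that only the inclusion $\la_{e_P}(A)\subseteq\bo{K}_{P,\la_\ast}$ is really needed, and that $j(\bo{K}_{\bo{x}\cap\bo{y},\la_\ast})$ is norm closed) simply make explicit points the paper leaves implicit.
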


\begin{proof}
First suppose that $z\in C_c(\fH,\bo{K}_{\bo{x},\la_\ast})$ and
$w\in C_c(\fH,\bo{K}_{\bo{y},\la_\ast})$. We have
\[
(z \ast u_i)(\fg)=\int_{\fH}z(\fh) f_i(\fh^{-1}\fg) \al_{\fh}(\la_{e_P}(e_i))\,d\mu(\fh)
\]
and
\[
(w \ast u_i)(\fg)=\int_{\fH}w(\fh)f_i(\fh^{-1}\fg) \al_{\fh}(\la_{e_P}(e_i))\,d\mu(\fh).
\]
Since $\al_{\fh}(\la_{e_P}(A))=\la_{e_P}(A)$ for every $\fh\in \fH$ and $\bo{K}_{\bo{x},\la_\ast} \cdot \la_{e_P}(A) \subseteq \bo{K}_{\bo{x},\la_\ast}$, and similarly for $\bo{K}_{\bo{y},\la_\ast}$, we get $z \ast u_i\in C_c(\fH,\bo{K}_{\bo{x},\la_\ast})$ and $w \ast u_i\in C_c(\fH,\bo{K}_{\bo{y},\la_\ast})$.
By \eqref{Eq:Buss} and \eqref{Eq:avercomp} we get
\[
\E_{\de^{\rtimes}}((z \ast u_i)^* \ast (w\ast u_i))
=
j\left(\int_{\fH}\al_{\fh^{-1}}
\big((z \ast u_i)(\fh)^*(w \ast u_i)(\fh)\big)\,d\mu(\fh)\right).
\]
Since the closed subspace $[\bo{K}_{\bo{x},\la_\ast} \cdot \bo{K}_{\bo{y},\la_\ast} ]$ is $\al$-invariant the integral belongs to
\[
[\bo{K}_{\bo{x},\la_\ast} \cdot \bo{K}_{\bo{y},\la_\ast} ] \subseteq \bo{K}_{\bo{x}\cap \bo{y},\la_\ast}.
\]
For arbitrary
$z\in \bo{K}_{\bo{x},\la_\ast} \rtimes_{\al,\la}\fH$ and
$w\in \bo{K}_{\bo{y},\la_\ast} \rtimes_{\al,\la}\fH$ choose nets
$z_\nu\in C_c(\fH,\bo{K}_{\bo{x},\la_\ast})$ and $w_{\nu'}\in C_c(\fH,\bo{K}_{\bo{y},\la_\ast})$ with $z_\nu \xrightarrow{\nu}z$ and $w_{\nu'} \xrightarrow{\nu'} w$ in norm. By Lemma~\ref{L:local1} (iii) we have
\[
\|\Theta_i(z_\nu,w_{\nu'})-\Theta_i(z,w)\|
\xrightarrow{\nu,\nu'} 0.
\]
In particular, since $\Theta_i(z_\nu,w_{\nu'})\in j(\bo{K}_{\bo{x}\cap \bo{y},\la_\ast})$ we conclude that
\[
\Theta_i(z,w)=\E_{\de^{\rtimes}}((z u_i)^*(w u_i))\in j(\bo{K}_{\bo{x}\cap \bo{y},\la_\ast}),
\]
as required.
\end{proof}

We will also need the following lemma.

\begin{lemma}\label{L:local2}
With the aforementioned notation, let $(u_i)_i$ be the contractive approximate unit for $\T_\la(X)\rtimes_{\al,\la}\fH$ defined in \eqref{Eq:appunit}. If $z\in\T_\la(X) \rtimes_{\al,\la}\fH$ and
$\E_{\de^{\rtimes}}((z u_i)^*(zu_i))=0$
for every $i$,
then $z=0$.
\end{lemma}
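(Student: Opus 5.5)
The argument is short. We combine the faithfulness statement of Lemma~\ref{L:Bim}(ii) with the fact that $(u_i)_i$ is an approximate unit.

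First, fix an index $i$. By Lemma~\ref{L:local1}(ii) we have $zu_i \in \N_{\de^\rtimes}$, so $(zu_i)^*(zu_i)\in\M_{\de^\rtimes}^+$ and the hypothesis makes sense. Lemma~\ref{L:Bim}(ii), applied to $x := zu_i \in \N_{\de^\rtimes}$, then gives $zu_i = 0$. This step relies on Proposition~\ref{P:faithslice} together with the injectivity of $\ol{\de^\rtimes}$, and both are already packaged into Lemma~\ref{L:Bim}(ii).

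Second, since this holds for every $i$ and $(u_i)_i$ is a contractive approximate unit for $\T_\la(X)\rtimes_{\al,\la}\fH$, we obtain
\[
z=\lim_i zu_i = 0
\]
in norm.

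The only point to check is that Lemma~\ref{L:Bim}(ii) applies to elements of $\N_{\de^\rtimes}$ that lie in $\M(\T_\la(X)\rtimes_{\al,\la}\fH)$, and not only in $C_c$. This is indeed the case: its proof uses only the definition of $\N_{\de^\rtimes}$ and the faithfulness of the slice map from Proposition~\ref{P:faithslice}. So I do not expect any genuine obstacle. The substantive work was already done in establishing that $zu_i$ lies in the square-integrable domain.
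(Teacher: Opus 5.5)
Your proposal is correct and is the paper's own proof. You apply Lemma~\ref{L:Bim}(ii) to $zu_i\in\N_{\de^\rtimes}$, with membership coming from Lemma~\ref{L:local1}(ii), to get $zu_i=0$ for every $i$, and then conclude $z=\lim_i zu_i=0$ because $(u_i)_i$ is an approximate unit.
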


\begin{proof}
If $\E_{\de^{\rtimes}}((z u_i)^*(zu_i))=0$ for all $i$, then we apply
Lemma~\ref{L:Bim} (ii) to get $z u_i=0$ for all $i$.
Since $z u_i \to z$ in norm, we get that $z=0$, as required.
\end{proof}

The following is the key result of this work.

\begin{theorem}\label{T:idisfock}
Let $X$ be a non-degenerate product system over a unital subsemigroup $P$ of a discrete group
$G$, and let $\al$ be a generalised gauge action of a locally compact Hausdorff group
$\fH$ on $\T_\la(X)$.  The identity representation
\[
\io^\rtimes \colon X\rtimes_{\al,\la}\fH
\to
\T_\la(X)\rtimes_{\al,\la}\fH
\]
is an injective Fock covariant representation of $X\rtimes_{\al,\la}\fH$ that admits a normal coaction by $G$.
\end{theorem}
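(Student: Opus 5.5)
Three properties have to be checked for $\io^\rtimes$: injectivity, a normal coaction by $G$, and Fock covariance. Injectivity is immediate, since $\io^\rtimes_{e_P}$ is the inclusion of $A\rtimes_{\al,\la}\fH$ into $\T_\la(X)\rtimes_{\al,\la}\fH$. For the coaction I would argue as follows. Because $\al$ is a generalised gauge action, it commutes with the normal coaction of $G$ on $\T_\la(X)\simeq\ca_\la(\F\C X)$, i.e.\ with the reduced coaction $\la_p(\xi_p)\mapsto \la_p(\xi_p)\otimes \la_G(p)$. Hence $(\T_\la(X)\otimes\ca_\la(G),\fH,\al\otimes\id)$ is a C*-dynamical system, and functoriality of reduced crossed products gives a $*$-homomorphism
\[
\T_\la(X)\rtimes_{\al,\la}\fH\to (\T_\la(X)\otimes\ca_\la(G))\rtimes_{\al\otimes\id,\la}\fH\simeq (\T_\la(X)\rtimes_{\al,\la}\fH)\otimes\ca_\la(G).
\]
This map is injective because reduced crossed products preserve injective equivariant maps, and it sends $f_p\in C_c(\fH,\la_p(X_p))$ to $f_p\otimes\la_G(p)$. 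It is therefore a reduced coaction by $G$ on the C*-algebra generated by $\io^\rtimes$. Composing with the canonical quotient $\ca(G)\to\ca_\la(G)$ (equivalently, using the Fell bundle of spectral subspaces) yields a normal coaction in the sense of the paper. Equivariance of $\io^\rtimes$ follows, since $\ca(\io^\rtimes)=\T_\la(X)\rtimes_{\al,\la}\fH$.

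For Fock covariance I apply Theorem~\ref{T:Fockcov} to the injective representation $\io^\rtimes$. Condition (i) comes from Lemma~\ref{L:kcores}: it gives $\bo{K}^\rtimes_{\mt,\io^\rtimes_\ast}\subseteq \bo{K}_{\mt,\la_\ast}\rtimes_{\al,\la}\fH$, and the latter is $(0)$ because $\la$ is Fock covariant, so $\bo{K}_{\mt,\la_\ast}=(0)$ by Theorem~\ref{T:Fockcov}.

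For condition (ii), fix a $\cap$-closed $\F=\{\bo{x}_1,\dots,\bo{x}_n\}$ and elements $b_i$ whose images $z_i:=\io^\rtimes_\ast(b_{\bo{x}_i})$ lie in $\bo{K}_{\bo{x}_i,\la_\ast}\rtimes_{\al,\la}\fH$ by Lemma~\ref{L:kcores}. Assume $\sum_{i:r\in\bo{x}_i} z_i\,(X_r\rtimes_{\al,\la}\fH)=(0)$ for every $r\in\bigcup\bo{x}_i$, and set $z=\sum_i z_i$. By Lemma~\ref{L:local2} it suffices to show $\E_{\de^\rtimes}((zu_i)^*(zu_i))=0$ for each fixed $u_i$. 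Expanding the sum, Lemma~\ref{L:invar} places
\[
\E_{\de^\rtimes}((zu_i)^*(zu_i))=\sum_{k,l}\Theta_i(z_k,z_l)\in j\Big(\sum_{k,l}\bo{K}_{\bo{x}_k\cap\bo{x}_l,\la_\ast}\Big).
\]
Because $\F$ is $\cap$-closed, this element equals $j\big(\sum_m c_m\big)$ with $c_m\in\bo{K}_{\bo{x}_m,\la_\ast}$. Since $j$ is injective, the goal is to show $\sum_m c_m=0$, and I would obtain this from condition (ii) of Theorem~\ref{T:Fockcov} for $\la$. That requires checking $\sum_{m:r\in\bo{x}_m}c_m\la_r(X_r)=(0)$ for each $r$.

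This check is the main obstacle: the hypothesis on $z$ has to be transferred through $\E_{\de^\rtimes}$. The plan is as follows. Since $\la_r(X_r)$ lies in the fixed-point algebra and $\E$ is a bimodule map over $j(\T_\la(X))$ (Lemma~\ref{L:Bim}(i), extended to $\T_\la(X)$ by the same result of Buss), for $\xi,\eta\in X_r$ one has
\[
j(\la_r(\eta))^*\,\E\big((zu_i)^*(zu_i)\big)\,j(\la_r(\xi))=\E\big((z u_i j(\la_r(\eta)))^*(z u_i j(\la_r(\xi)))\big).
\]
Next I would use that $u_i\, j(\la_r(\xi))$ can be approximated in norm by elements of $X_r\rtimes_{\al,\la}\fH$; this uses non-degeneracy and the invariance of $\la_r(X_r)$. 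Then $z_k u_i j(\la_r(\xi))=0$ whenever the relevant index condition holds.

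One subtlety remains: the ideal membership $r\in\bo{x}_k\cap\bo{x}_l$ has to be matched with the terms that the hypothesis actually kills. In the Fock picture, $\bo{K}_{\bo{x},\la_\ast}\la_r(X_r)=(0)$ whenever $r\notin\bo{x}$, so $z_k\,(X_r\rtimes_{\al,\la}\fH)=0$ automatically for $r\notin\bo{x}_k$. Consequently $z\,(X_r\rtimes_{\al,\la}\fH)=0$ for every $r$ in the union. Passing to the compressed averages via Cauchy--Schwarz (the bound in Lemma~\ref{L:local1}(iii)) then gives $\sum_{m: r\in\bo{x}_m}c_m\la_r(X_r)=(0)$. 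Theorem~\ref{T:Fockcov} for $\la$ yields $\sum_m c_m=0$, hence $z=0$ by Lemma~\ref{L:local2}, which establishes condition (ii).
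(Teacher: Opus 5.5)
Your handling of injectivity, condition (i), and the reduction of condition (ii) to showing $\E_{\de^\rtimes}((zu_i)^*(zu_i))=0$ via Lemmas~\ref{L:kcores}, \ref{L:invar} and~\ref{L:local2} matches the paper. However, the step you call the main obstacle rests on a false claim. It is not true that $\bo{K}_{\bo{x},\la_\ast}\la_r(X_r)=(0)$ whenever $r\notin\bo{x}$. Elements of $\bo{K}_{\bo{x},\la_\ast}$ do vanish on the fibres $X_s$ of $\F X$ with $s\notin\bo{x}$. But $\la_r(\xi_r)$ maps $X_s$ into $X_{rs}$, and $rs$ may lie in $\bo{x}$ although $r$ does not. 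Concretely, take $P=\bZ_+^2$ with one-dimensional fibres (isometries $V_p$ on $\ell^2(\bZ_+^2)$), $\bo{x}=(1,1)+P$ and $r=(1,0)$. The element $V_{(1,1)}V_{(1,1)}^*\in\bo{K}_{\bo{x},\la_\ast}$ satisfies $V_{(1,1)}V_{(1,1)}^*V_{(1,0)}\delta_{(0,1)}=\delta_{(1,1)}\neq 0$. So you cannot pass from the hypothesis to $z\,(X_r\rtimes_{\al,\la}\fH)=(0)$, because the hypothesis only controls the partial sums $y_r:=\sum_{k:r\in\bo{x}_k}z_k$. Even granting that, compressing $\E_{\de^\rtimes}((zu_i)^*(zu_i))$ by $j(\la_r(X_r))$ only yields $(\sum_m c_m)\la_r(X_r)=(0)$ for the full sum. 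Discarding the terms with $r\notin\bo{x}_m$, to reach the partial sum required in condition (ii) of Theorem~\ref{T:Fockcov}, uses the same false claim.

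The missing idea is to average $y_r u_i$ rather than $z u_i$ at this stage. By sesquilinearity of $\Theta_i$,
\[
\E_{\de^\rtimes}((y_ru_i)^*(y_ru_i))=\sum_{k:r\in\bo{x}_k}\ \sum_{l:r\in\bo{x}_l}\Theta_i(z_k,z_l).
\]
Since $\{(k,l): r\in\bo{x}_k\cap\bo{x}_l\}=\{k:r\in\bo{x}_k\}\times\{l:r\in\bo{x}_l\}$, this is exactly $j\big(\sum_{m:r\in\bo{x}_m}c_m\big)$, if you form $c_m$ by grouping the pairs $(k,l)$ according to the value of $\bo{x}_k\cap\bo{x}_l$. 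Now the hypothesis applies verbatim. The element $u_i\,j(\la_r(\xi_r))$ lies in $X_r\rtimes_{\al,\la}\fH$ itself (no approximation is needed), so $y_ru_i\,j(\la_r(\xi_r))=0$. Lemma~\ref{L:Bim}(i) then gives $j(\la_r(\xi_r))^*\,\E_{\de^\rtimes}((y_ru_i)^*(y_ru_i))\,j(\la_r(\xi_r))=0$. Positivity of this average gives $\E_{\de^\rtimes}((y_ru_i)^*(y_ru_i))\,j(\la_r(\xi_r))=0$. Injectivity of $j$ yields $\sum_{m:r\in\bo{x}_m}c_m\la_r(X_r)=(0)$, and the rest of your argument goes through; this is how the paper proceeds.

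A minor point: in the coaction paragraph, composing with $\ca(G)\to\ca_\la(G)$ goes the wrong way. Instead, apply the same crossed-product functoriality to the full coaction $\T_\la(X)\to\T_\la(X)\otimes\ca(G)$, as the paper does, and use your injective $\ca_\la(G)$-valued map to get normality.
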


\begin{proof}
Injectivity is immediate.  By \cite[Proposition 4.1]{DKKLL22} we have that $\T_\la(X)$ admits a normal coaction by $G$. Since $\al$ is a generalised gauge action there is an induced $*$-homomorphism
\[
\T_\la(X)\rtimes_{\al,\la}\fH \to (\T_\la(X) \otimes \ca(G)) \rtimes_{\al\otimes \id, \la} \fH.
\]
By \cite[Lemma~7.16]{Wil07} we have a canonical $*$-isomorphism
\[
 (\T_\la(X) \otimes \ca(G)) \rtimes_{\al\otimes \id, \la} \fH
 \simeq 
 (\T_\la(X) \rtimes_{\al, \la} \fH) \otimes \ca(G),
\]
and hence by composing we get that $\io^\rtimes$ admits a coaction by $G$. Normality of the coaction by $G$ on $\T_\la(X)$ yields in particular that $\io^\rtimes$ admits a normal coaction by $G$.

We will use the characterisation of Fock covariance of Theorem \ref{T:Fockcov}, and verify that $\io^\rtimes$ satisfies conditions (i) and (ii) therein. By \cite[Proposition 2.4]{KP25} we have $\bo{K}_{\mt,\la_\ast}=(0)$ and thus Lemma~\ref{L:kcores} yields that
\[
\bo{K}^\rtimes_{\mt,\io^\rtimes_\ast}
\subseteq
\bo{K}_{\mt,\la_\ast}\rtimes_{\al,\la}\fH=(0).
\]
This shows that condition (i) is satisfied by $\io^\rtimes$.

We now prove that condition (ii) is satisfied by $\io^\rtimes$.  Let
$\F=\{\bo{x}_1,\dots,\bo{x}_n\}\subseteq\J$ be finite and $\cap$-closed with
$\bigcup_{k=1}^n\bo{x}_k\neq\mt$, and
$b_k\in\bo{K}^{\rtimes}_{\bo{x}_k,\widehat t^\rtimes_\ast}$ such that
\[
\sum_{k:r\in\bo{x}_k}\io^\rtimes_\ast(b_k)\,
\io^\rtimes_r(X_r\rtimes_{\al,\la}\fH)=(0)
\foral r\in\bigcup_{k=1}^n\bo{x}_k.
\]
We must prove that $\sum_{k=1}^n\io^\rtimes_\ast(b_k)=0$.  To ease notation, set
\[
c:=\sum_{k=1}^n\io^\rtimes_\ast(b_k) \qand c_r:=\sum_{k:r\in\bo{x}_k}\io^\rtimes_\ast(b_k).
\]

Recall the contractive approximate unit $(u_i)_i \subseteq C_c(\fH,\la_{e_P}(A))$ for $\T_\la(X)\rtimes_{\al,\la}\fH$ defined in \eqref{Eq:appunit} and fix a $u_i$. By Lemma~\ref{L:kcores} we have that 
\[
\io^\rtimes_\ast(b_k) \in \bo{K}^{\rtimes}_{\bo{x}_k,\io^\rtimes_\ast} \subseteq \bo{K}_{\bo{x}_k,\la_\ast}\rtimes_{\al,\la}\fH \foral k\in \{1,\dots, n\}.
\]
Applying Lemma~\ref{L:invar} yields that
\[
\E_{\de^{\rtimes}}((\io^\rtimes_\ast(b_k) u_i)^*(\io^\rtimes_\ast(b_\ell) u_i))
\in j(\bo{K}_{\bo{x}_k\cap\bo{x}_\ell,\la_\ast}) \foral k,\ell\in \{1,\dots, n\}.
\]
Now choose $d_{k\ell}^{i}\in\bo{K}_{\bo{x}_k\cap\bo{x}_\ell,\widehat t_\ast}$ such that
\[
\E_{\de^{\rtimes}}((\io^\rtimes_\ast(b_k) u_i)^*(\io^\rtimes_\ast(b_\ell) u_i))
=j(\la_\ast(d_{k\ell}^{i})) \foral k,\ell\in \{1,\dots, n\}.
\]
By Lemma \ref{L:properties} we get that 
\[
\sum_{k,\ell=1}^n(\io^\rtimes_\ast(b_k) u_i)^*(\io^\rtimes_\ast(b_\ell) u_i) \in \M_{\de^{\rtimes}},
\]
and hence linearity of $\E_{\de^{\rtimes}}$ on $\M_{\de^{\rtimes}}$ gives us that
\[
\E_{\de^{\rtimes}}((c u_i)^*(c u_i))
=
\sum_{k,\ell=1}^n \E_{\de^{\rtimes}}((\io^\rtimes_\ast(b_k) u_i)^*(\io^\rtimes_\ast(b_\ell) u_i))
=\sum_{k,\ell=1}^n j(\la_\ast(d_{k\ell}^{i})).
\]
Moreover, for any $r\in\bigcup_{k=1}^n\bo{x}_k$ we have
\[
\{(k,\ell)\in \{1,\dots, n\}^2\mid r\in\bo{x}_k\cap\bo{x}_\ell\}
=
\{k\in \{1,\dots, n\} \mid r\in \bo{x}_k\} \times \{\ell \in \{1,\dots, n\} \mid r\in \bo{x}_\ell\},
\]
and hence
\[
\E_{\de^{\rtimes}}((c_ru_i)^*(c_ru_i))=\sum_{k:r\in \bo{x}_k} \sum_{\ell: r\in \bo{x}_\ell} \E_{\de^{\rtimes}}((\io^\rtimes_\ast(b_k) u_i)^*(\io^\rtimes_\ast(b_\ell) u_i))
= j\left(\sum_{k,\ell :r\in \bo{x}_k \cap \bo{x}_\ell} \la_\ast(d_{k\ell}^{i})\right).
\]

Now fix $r\in\bigcup_{k=1}^n\bo{x}_k$ and $\xi_r\in X_r$. We have $u_i j(\la_r(\xi_r))\in X_r\rtimes_{\al,\la}\fH$ (see for example \cite[Appendix A, Remark A.8]{EKQR06}) and thus by our hypothesis we get
\[
c_ru_i j(\la_r(\xi_r))=\sum_{k:r\in\bo{x}_k}\io^\rtimes_\ast(b_k)\,
\io^\rtimes_r(u_i j(\la_r(\xi_r)))=0.
\]
Applying Lemma~\ref{L:Bim} (i) for $(c_r u_i)^* (c_r u_i) \in \M_{\de^{\rtimes}}$ and $j(\la_r(\xi_r))\in \M^{\rm fix}_{\de^{\rtimes}}$ implies that
\[
0
=\E_{\de^{\rtimes}} (j(\la_r(\xi_r))^* (u_i^* c_r^* c_r u_i) j(\la_r(\xi_r)))
=j(\la_r(\xi_r))^* \E_{\de^{\rtimes}} ((c_ru_i)^*(c_ru_i))j(\la_r(\xi_r)).
\]
Note that $\E_{\de^{\rtimes}}((c_ru_i)^*(c_ru_i))\geq0$ and hence it follows that
\[
\E_{\de^{\rtimes}}((c_ru_i)^*(c_ru_i))^{1/2}j(\la_r(\xi_r))=0.
\]
In particular, since $\xi_r$ was arbitrary we get
\[
\E_{\de^{\rtimes}}((c_ru_i)^*(c_ru_i))j(\la_r(X_r))=0.
\]
Injectivity of $j$ then yields
\[
\sum_{k,\ell :r\in\bo{x}_k\cap\bo{x}_\ell}
\la_\ast(d_{k\ell}^{i})\la_r(X_r)=(0)
\foral r\in\bigcup_{k=1}^n\bo{x}_k=\bigcup_{k,\ell=1}^n\bo{x}_k\cap \bo{x}_\ell.
\]
Since $\la$ satisfies condition (ii) of
Theorem \ref{T:Fockcov} we get
\[
\sum_{k,\ell=1}^n\la_\ast(d_{k\ell}^{i})=0,
\]
and hence
\[
\E_{\de^{\rtimes}}((c u_i)^*(c u_i))=j\left(\sum_{k,\ell=1}^n\la_\ast(d_{k\ell}^{i})\right)=0.
\]
Since $i$ was arbitrary, Lemma~\ref{L:local2} then implies that $c=0$, as required.
\end{proof}

We are now ready to obtain the positive resolution of the reduced Hao--Ng isomorphism problem for non-degenerate product systems in full generality.

\begin{theorem}\label{T:HaoNg}
Let $P$ be a unital subsemigroup of a discrete group $G$ and let $X$ be a non-degenerate product system over $P$.
Let $\al$ be a generalised gauge action of a locally compact Hausdorff group $\fH$ on $\T_\la(X)$ and let $\dot{\al}$ denote the induced action of $\fH$ on $A \times_{X,\la} P$.
Then there exists a canonical completely isometric isomorphism
\[
\T_\la(X \rtimes_{\al, \la} \fH)^+ \simeq \T_\la(X)^+ \rtimes_{\al, \la} \fH,
\]
and consequently, a canonical $*$-isomorphism
\[
(A \rtimes_{\al, \la} \fH) \times_{X \rtimes_{\al, \la} \fH,\la} P
\simeq
(A \times_{X, \la} P) \rtimes_{\dot \al, \la} \fH.
\]
\end{theorem}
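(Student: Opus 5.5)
The plan is to use Theorem~\ref{T:idisfock} to identify the Fock C*-algebra of the induced product system, restrict that identification to the tensor algebras, and then pass to C*-envelopes.

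\textbf{Step 1: the Fock algebra.} Set $Y:=X\rtimes_{\al,\la}\fH$. By Theorem~\ref{T:idisfock}, the identity representation $\io^\rtimes$ of $Y$ is injective, Fock covariant, and admits a normal coaction by $G$. Fock covariance means $\io^\rtimes_\ast$ factors through $\T^{\fock}_{\cov}(Y)=\ca(\F\C Y)$. Since $\io^\rtimes$ is injective, its restriction to the unit fibre of $\F\C Y$ is injective. The induced map on $\ca(\F\C Y)$ is equivariant with respect to the coactions by $G$, and the coaction on its range is normal. A standard Fell bundle argument (Exel's) then gives canonical surjections
\[
\ca(\F\C Y)\to \ca(\io^\rtimes)\to \ca_\la(\F\C Y)=\T_\la(Y).
\]
The left surjection is the factorisation through $\T^{\fock}_{\cov}(Y)$. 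The right one exists because the coaction on $\ca(\io^\rtimes)$ is normal and the bundle map is injective on fibres. We also get injectivity of $\io^\rtimes_\ast$ on the fixed-point algebra, via the injectivity of $\la$ on $[\T_\la(Y)]_{e_G}$. Combining these, $\ca(\io^\rtimes)$ is canonically $*$-isomorphic to $\T_\la(Y)$.

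\textbf{Step 2: the range is the whole crossed product.} I will check that $\ca(\io^\rtimes)=\T_\la(X)\rtimes_{\al,\la}\fH$. Elements of $C_c(\fH,\la_p(X_p))$, together with their adjoints and products, generate a dense subset. Non-degeneracy and the approximate unit \eqref{Eq:appunit} ensure that $C_c(\fH,\T_\la(X))$ is reached. This gives the $*$-isomorphism $\T_\la(Y)\simeq \T_\la(X)\rtimes_{\al,\la}\fH$ that sends $\la^Y_p$ to the inclusion of $X_p\rtimes_{\al,\la}\fH$.

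\textbf{Step 3: tensor algebras.} Restricting this $*$-isomorphism to the closed algebra generated by $\{X_p\rtimes_{\al,\la}\fH\}_{p\in P}$ gives a completely isometric isomorphism of $\T_\la(Y)^+$ onto the closed algebra generated by $C_c(\fH,\la_p(X_p))$, $p\in P$. I then need this algebra to coincide with $\T_\la(X)^+\rtimes_{\al,\la}\fH$ in the sense of \cite{KR19}. That object is defined inside $\cenv(\T_\la(X)^+)\rtimes_{\dot\al,\la}\fH$, not inside $\T_\la(X)\rtimes_{\al,\la}\fH$. So the key point, already used in \cite{KR19,KR21}, is that for any C*-cover $\C$ of $\A$ with an extended action, the subalgebra of $\C\rtimes_\la\fH$ generated by $C_c(\fH,\A)$ is completely isometrically isomorphic to $\A\rtimes_{\al,\la}\fH$. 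This is \cite[Lemma~3.18--Definition~3.17]{KR19}. Density of $\spn\{C_c(\fH,\la_p(X_p))\}$ in $C_c(\fH,\T_\la(X)^+)$, for the crossed-product norm, then finishes the first isomorphism.

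\textbf{Step 4: the C*-envelopes.} By \cite[Theorem~5.1]{Seh22}, $\cenv(\T_\la(Y)^+)\simeq (A\rtimes_{\al,\la}\fH)\times_{Y,\la}P$ and $\cenv(\T_\la(X)^+)\simeq A\times_{X,\la}P$. Since $X$ is non-degenerate, $\T_\la(X)^+$ has a contractive approximate unit coming from $A$, so \cite[Theorem~4.4]{DT26} gives
\[
\cenv(\T_\la(X)^+\rtimes_{\al,\la}\fH)\simeq \cenv(\T_\la(X)^+)\rtimes_{\dot\al,\la}\fH.
\]
Chaining these with Step 3 yields the canonical $*$-isomorphism. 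Canonicity is checked on generators: every map fixes the copies of $C_c(\fH,\la_p(X_p))$.

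The main obstacle I expect is Step 1. It is the passage from Fock covariance plus a normal coaction to identifying $\ca(\io^\rtimes)$ with the reduced algebra $\T_\la(Y)$ rather than with some intermediate quotient. The identification in Step 3 of the non-selfadjoint crossed product computed in different C*-covers is the secondary delicate point.
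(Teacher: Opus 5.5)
\textbf{The gap is Step~1.} This is precisely the step the paper is built to avoid.

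Injectivity of $\io^\rtimes$ only means injectivity on the coefficient algebra $A\rtimes_{\al,\la}\fH$. The unit fibre of $\F\C Y$, however, is the whole fixed-point algebra $[\T_\la(Y)]_{e_G}$, i.e.\ the closure of the sum of all $\bo{K}$-cores of the Fock representation of $Y$. So the inference ``$\io^\rtimes$ is injective, hence injective on the unit fibre of $\F\C Y$'' is invalid. Likewise, faithfulness of the Fock representation on $[\T_\la(Y)]_{e_G}$ says nothing about $\io^\rtimes_\ast$.

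Injectivity, Fock covariance and a normal coaction together do not force $\ca(t)\simeq\T_\la$. Take $P=\bZ_+$ and $X_n\simeq\bC$ for all $n$, so that $\T_\la(X)$ is the Toeplitz algebra of the unilateral shift $S$ and every representation is Fock covariant. Sending $S$ to $z\in C(\mathbb{T})$ gives an injective representation with a normal gauge coaction. Yet it kills $1-SS^*$, which lies in the fixed-point algebra. More generally, the canonical representation into $A\times_{X,\la}P$ has all three properties.

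Hence Theorem~\ref{T:idisfock} does not yield $\ca(\io^\rtimes)\simeq\T_\la(Y)$, which is the C*-level isomorphism \eqref{eq:fccp}. The paper explicitly remarks that, outside the compactly aligned right LCM setting, it is unclear whether \eqref{eq:fccp} holds. To keep your route you would need a separate proof that $\io^\rtimes_\ast$ is faithful on $[\T_\la(Y)]_{e_G}$, and you do not supply one.

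\textbf{The repair.} Stay at the non-selfadjoint level, where the three properties in Theorem~\ref{T:idisfock} are exactly what is needed. By \cite[Corollary~4.1]{KP25}, for an injective Fock covariant representation admitting a normal coaction, the norm-closed algebra generated by the images of the fibres is canonically completely isometrically isomorphic to the tensor algebra. This holds regardless of which C*-algebra the representation generates.

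Applied to $\io^\rtimes$, this maps $\T_\la(Y)^+$ completely isometrically onto the closed algebra generated by $\{X_p\rtimes_{\al,\la}\fH\}_{p\in P}$ inside $\T_\la(X)\rtimes_{\al,\la}\fH$. Your Step~2 then becomes unnecessary.

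From there your Steps~3 and~4 coincide with the paper's proof. \cite[Corollary~3.16]{KR19} identifies this algebra with $\T_\la(X)^+\rtimes_{\al,\la}\fH$. Then \cite[Theorem~4.4]{DT26}, using the contractive approximate unit coming from non-degeneracy, together with \cite[Theorem~5.1]{Seh22}, gives the asserted $*$-isomorphism.
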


\begin{proof}
By Theorem~\ref{T:idisfock}, the identity representation
\[
\io^\rtimes:X\rtimes_{\al,\la}\fH
\to \T_\la(X)\rtimes_{\al,\la}\fH
\]
is injective and Fock covariant, and admits a normal coaction by $G$.
By \cite[Corollary 4.1]{KP25} we obtain a canonical completely isometric isomorphism between the tensor algebra $\T_\la(X\rtimes_{\al,\la}\fH)^+$ and the norm-closed algebra generated by $\{\io^\rtimes_p(X_p\rtimes_{\al,\la}\fH):p\in P\}$ inside $\T_\la(X)\rtimes_{\al,\la}\fH$. By \cite[Corollary 3.16]{KR19} we have that $\T_\la(X)^+ \rtimes_{\al, \la} \fH$ can be considered canonically inside $\T_\la(X) \rtimes_{\al, \la} \fH$, and it is exactly the norm-closed algebra generated by $\{\io^\rtimes_p(X_p\rtimes_{\al,\la}\fH):p\in P\}$. Therefore, we get a canonical completely isometric isomorphism
\[
\T_\la(X\rtimes_{\al,\la}\fH)^+
\simeq
\T_\la(X)^+ \rtimes_{\al, \la} \fH.
\]
Since $X$ is non-degenerate, any contractive approximate unit for $A$ gives a 
contractive approximate unit for $\T_\la(X)^+$.
By \cite[Theorem~4.4]{DT26} we get a canonical
$*$-isomorphism
\[
\cenv(\T_\la(X)^+\rtimes_{\al,\la}\fH)
\simeq
\cenv(\T_\la(X)^+)\rtimes_{\dot\al,\la}\fH,
\]
and \cite[Theorem~5.1]{Seh22} completes the proof.
\end{proof}


\begin{thebibliography}{99}

\bibitem{Aba10}
B.\ Abadie,
\textit{Takai duality for crossed products by Hilbert $C^*$-bimodules},
J.\ Operator Theory \textbf{64} (2010), no.\ 1, 19--34.

\bibitem{Arv89}
W.\ B.\ Arveson,
\textit{Continuous analogues of Fock space},
Mem.\ Amer.\ Math.\ Soc.\ \textbf{80} (1989), no.\ 409, iv+66 pp.

\bibitem{Arv11}
W.\ B.\ Arveson,
\textit{The noncommutative Choquet boundary II: hyperrigidity},
Israel J.\ Math.\ \textbf{184} (2011), 349--385.

\bibitem{BKQR15}
E.\ B\'edos, S.\ Kaliszewski, J.\ Quigg and D.\ Robertson,
\textit{A new look at crossed product correspondences and associated
$C^*$-algebras},
J.\ Math.\ Anal.\ Appl.\ \textbf{426} (2015), no.\ 2, 1080--1098.

\bibitem{BL04}
D.\ P.\ Blecher and C.\ Le Merdy,
\textit{Operator algebras and their modules---an operator space approach},
London Mathematical Society Monographs, New Series, vol.\ 30,
Oxford University Press, Oxford, 2004.

\bibitem{BLS18}
N.\ Brownlowe, N.\ S.\ Larsen and N.\ Stammeier,
\textit{$C^*$-algebras of algebraic dynamical systems and right LCM semigroups},
Indiana Univ.\ Math.\ J.\ \textbf{67} (2018), no.\ 6, 2453--2486.

\bibitem{BM09}
A.\ Buss and R.\ Meyer,
\textit{Square-integrable coactions of locally compact quantum groups},
Rep.\ Math.\ Phys.\ \textbf{63} (2009), no.\ 1, 191--224.

\bibitem{Bus10}
A.\ Buss,
\textit{Integrability of dual coactions on Fell bundle $C^*$-algebras},
Bull.\ Braz.\ Math.\ Soc.\ (N.S.) \textbf{41} (2010), no.\ 4, 607--641.

\bibitem{CLSV11}
T.\ M.\ Carlsen, N.\ S.\ Larsen, A.\ Sims and S.\ T.\ Vittadello,
\textit{Co-universal algebras associated to product systems, and
gauge-invariant uniqueness theorems},
Proc.\ Lond.\ Math.\ Soc.\ (3) \textbf{103} (2011), no.\ 4, 563--600.

\bibitem{Com68}
F.\ Combes,
\textit{Poids sur une $C^*$-alg\`ebre},
J.\ Math.\ Pures Appl.\ (9) \textbf{47} (1968), 57--100 (French).

\bibitem{CDL13}
J.\ Cuntz, C.\ Deninger and M.\ Laca,
\textit{$C^*$-algebras of Toeplitz type associated with algebraic number fields},
Math.\ Ann.\ \textbf{355} (2013), no.\ 4, 1383--1423.

\bibitem{Din91}
H.\ T.\ Dinh,
\textit{Discrete product systems and their $C^*$-algebras},
J.\ Funct.\ Anal.\ \textbf{102} (1991), no.\ 1, 1--34.

\bibitem{DKKLL22}
A.\ Dor-On, E.\ T.\ A.\ Kakariadis, E.\ G.\ Katsoulis, M.\ Laca and X.\ Li,
\textit{$C^*$-envelopes for operator algebras with a coaction and co-universal
$C^*$-algebras for product systems},
Adv.\ Math.\ \textbf{400} (2022), Paper No.\ 108286, 40 pp.

\bibitem{DK20}
A.\ Dor-On and E.\ G.\ Katsoulis,
\textit{Tensor algebras of product systems and their $C^*$-envelopes},
J.\ Funct.\ Anal.\ \textbf{278} (2020), no.\ 7, Paper No.\ 108416, 32 pp.

\bibitem{DT26}
A.\ Dor-On and I.\ Thompson,
\textit{The Hao--Ng isomorphism theorem for reduced crossed products},
preprint, arXiv:2505.00587v3 (2026).

\bibitem{EKQR06}
S.\ Echterhoff, S.\ Kaliszewski, J.\ Quigg and I.\ Raeburn,
\textit{A categorical approach to imprimitivity theorems for $C^*$-dynamical systems},
Mem.\ Amer.\ Math.\ Soc.\ \textbf{180} (2006), no.\ 850, viii+169 pp.

\bibitem{EN02}
R.\ Exel and C.-K.\ Ng,
\textit{Approximation property of $C^*$-algebraic bundles},
Math.\ Proc.\ Cambridge Philos.\ Soc.\ \textbf{132} (2002), no.\ 3, 509--522.

\bibitem{Exe17}
R.\ Exel,
\textit{Partial dynamical systems, Fell bundles and applications},
Mathematical Surveys and Monographs, vol.\ 224,
American Mathematical Society, Providence, RI (2017) vi+321 pp.

\bibitem{FD88}
J.\ M.\ G.\ Fell and R.\ S.\ Doran,
\textit{Representations of $*$-algebras, locally compact groups, and Banach
$*$-algebraic bundles. Vol.\ 2},
Pure and Applied Mathematics, vol.\ 126,
Academic Press, Boston, MA, 1988.

\bibitem{Fol16}
G.\ B.\ Folland,
\textit{A course in abstract harmonic analysis},
2nd ed.,
CRC Press, Boca Raton, FL, 2016.

\bibitem{Fow02}
N.\ J.\ Fowler,
\textit{Discrete product systems of Hilbert bimodules},
Pacific J.\ Math.\ \textbf{204} (2002), no.\ 2, 335--375.

\bibitem{FR98}
N.\ J.\ Fowler and I.\ Raeburn,
\textit{Discrete product systems and twisted crossed products by semigroups},
J.\ Funct.\ Anal.\ \textbf{155} (1998), no.\ 1, 171--204.

\bibitem{Ham79}
M.\ Hamana,
\textit{Injective envelopes of operator systems},
Publ.\ Res.\ Inst.\ Math.\ Sci.\ \textbf{15} (1979), no.\ 3, 773--785.

\bibitem{HN08}
G.\ Hao and C.-K.\ Ng,
\textit{Crossed products of $C^*$-correspondences by amenable group actions},
J.\ Math.\ Anal.\ Appl.\ \textbf{345} (2008), no.\ 2, 702--707.

\bibitem{KKLL22}
E.\ T.\ A.\ Kakariadis, E.\ G.\ Katsoulis, M.\ Laca and X.\ Li,
\textit{Boundary quotient $C^*$-algebras of semigroups},
J.\ Lond.\ Math.\ Soc.\ (2) \textbf{105} (2022), no.\ 4, 2136--2166.

\bibitem{KKLL23} E.\ T.\ A.\ Kakariadis,  E.\ G.\ Katsoulis, M.\ Laca and X.\ Li,
\textit{Co-universality and controlled maps on product systems by right LCM-semigroups}, 
Analysis \& PDE \textbf{16} (2023), no.\ 6, 1433--1483.

\bibitem{KP25}
E.\ T.\ A.\ Kakariadis and I.\ A.\ Paraskevas,
\textit{On Fock covariance for product systems and the reduced Hao--Ng
isomorphism problem by discrete actions},
Proc.\ Roy.\ Soc.\ Edinburgh Sect.\ A, First View (2025), 1--53,
doi:10.1017/prm.2025.21.

\bibitem{KMQW10}
S.\ Kaliszewski, P.\ S.\ Muhly, J.\ Quigg and D.\ P.\ Williams,
\textit{Coactions and Fell bundles},
New York J.\ Math.\ \textbf{16} (2010), 315--359.

\bibitem{Kat84}
Y.\ Katayama,
\textit{Takesaki's duality for a non-degenerate co-action},
Math.\ Scand.\ \textbf{55} (1984), no.\ 1, 141--151.

\bibitem{Kat17}
E.\ G.\ Katsoulis,
\textit{$C^*$-envelopes and the Hao--Ng isomorphism for discrete groups},
Int.\ Math.\ Res.\ Not.\ IMRN \textbf{2017} (2017), no.\ 18, 5751--5768.

\bibitem{Kat20}
E.\ G.\ Katsoulis,
\textit{Product systems of $C^*$-correspondences and Takai duality},
Israel J.\ Math.\ \textbf{240} (2020), no.\ 1, 223--251.

\bibitem{KK06}
E.\ G.\ Katsoulis and D.\ W.\ Kribs,
\textit{Tensor algebras of $C^*$-correspondences and their $C^*$-envelopes},
J.\ Funct.\ Anal.\ \textbf{234} (2006), no.\ 1, 226--233.

\bibitem{KR19}
E.\ G.\ Katsoulis and C.\ Ramsey,
\textit{Crossed products of operator algebras},
Mem.\ Amer.\ Math.\ Soc.\ \textbf{258} (2019), no.\ 1240, vii+85 pp.

\bibitem{KR21}
E.\ G.\ Katsoulis and C.\ Ramsey,
\textit{The non-selfadjoint approach to the Hao--Ng isomorphism},
Int.\ Math.\ Res.\ Not.\ IMRN \textbf{2021} (2021), no.\ 2, 1160--1197.

\bibitem{Kat03}
T.\ Katsura,
\textit{A construction of $C^*$-algebras from $C^*$-correspondences},
in \textit{Advances in quantum dynamics}, 173--182,
Contemp.\ Math., vol.\ 335,
Amer.\ Math.\ Soc., Providence, RI (2003).

\bibitem{Kat04}
T.\ Katsura,
\textit{On $C^*$-algebras associated with $C^*$-correspondences},
J.\ Funct.\ Anal.\ \textbf{217} (2004), no.\ 2, 366--401.

\bibitem{KP99}
A.\ Kumjian and D.\ Pask,
\textit{$C^*$-algebras of directed graphs and group actions},
Ergodic Theory Dynam.\ Systems \textbf{19} (1999), no.\ 6, 1503--1519.

\bibitem{Kus97A}
J.\ Kustermans,
\textit{KMS-weights on $C^*$-algebras},
preprint, arXiv:funct-an/9704008 (1997).

\bibitem{KV99}
J.\ Kustermans and S.\ Vaes,
\textit{Weight theory for $C^*$-algebraic quantum groups},
preprint, arXiv:math/9901063 (1999).

\bibitem{KL19a}
B.\ K.\ Kwa\'{s}niewski and N.\ S.\ Larsen,
\textit{Nica--Toeplitz algebras associated with right-tensor $C^*$-precategories
over right LCM semigroups},
Internat.\ J.\ Math.\ \textbf{30} (2019), no.\ 2, 1950013, 57 pp.

\bibitem{KL19b}
B.\ K.\ Kwa\'{s}niewski and N.\ S.\ Larsen,
\textit{Nica--Toeplitz algebras associated with product systems over right
LCM semigroups},
J.\ Math.\ Anal.\ Appl.\ \textbf{470} (2019), no.\ 1, 532--570.

\bibitem{LS22}
M.\ Laca and C.\ F.\ Sehnem,
\textit{Toeplitz algebras of semigroups},
Trans.\ Amer.\ Math.\ Soc.\ \textbf{375} (2022), no.\ 10, 7443--7507.

\bibitem{Lan79}
M.\ B.\ Landstad,
\textit{Duality theory for covariant systems},
Trans.\ Amer.\ Math.\ Soc.\ \textbf{248} (1979), 223--267.

\bibitem{Lan95}
E.\ C.\ Lance,
\textit{Hilbert $C^*$-modules: a toolkit for operator algebraists},
London Mathematical Society Lecture Note Series, vol.\ 210,
Cambridge University Press, Cambridge, 1995.

\bibitem{Li12}
X.\ Li,
\textit{Semigroup $C^*$-algebras and amenability of semigroups},
J.\ Funct.\ Anal.\ \textbf{262} (2012), no.\ 10, 4302--4340.

\bibitem{Ng96}
C.-K.\ Ng,
\textit{Discrete coactions on $C^*$-algebras},
J.\ Austral.\ Math.\ Soc.\ Ser.\ A \textbf{60} (1996), no.\ 1, 118--127.

\bibitem{Nic92}
A.\ Nica,
\textit{$C^*$-algebras generated by isometries and Wiener--Hopf operators},
J.\ Operator Theory \textbf{27} (1992), no.\ 1, 17--52.

\bibitem{Nor14}
M.\ D.\ Norling,
\textit{Inverse semigroup $C^*$-algebras associated with left cancellative semigroups},
Proc.\ Edinb.\ Math.\ Soc.\ (2) \textbf{57} (2014), no.\ 2, 533--564.

\bibitem{Pau02}
V.\ I.\ Paulsen,
\textit{Completely bounded maps and operator algebras},
Cambridge Studies in Advanced Mathematics, vol.\ 78, xii+300 pp.,
Cambridge University Press, Cambridge (2002).

\bibitem{Ped79}
G.\ K.\ Pedersen,
\textit{$C^*$-algebras and their automorphism groups},
London Mathematical Society Monographs, vol.\ 14,
Academic Press, London (1979).

\bibitem{Pim97}
M.\ V.\ Pimsner,
\textit{A class of $C^*$-algebras generalizing both Cuntz--Krieger algebras and
crossed products by $\mathbb{Z}$},
in \textit{Free probability theory} (Waterloo, ON, 1995), 189--212,
Fields Inst.\ Commun., vol.\ 12,
Amer.\ Math.\ Soc., Providence, RI (1997).

\bibitem{Seh19}
C.\ F.\ Sehnem,
\textit{On $C^*$-algebras associated to product systems},
J.\ Funct.\ Anal.\ \textbf{277} (2019), no.\ 2, 558--593.

\bibitem{Seh22}
C.\ F.\ Sehnem,
\textit{$C^*$-envelopes of tensor algebras of product systems},
J.\ Funct.\ Anal.\ \textbf{283} (2022), no.\ 12, Paper No.\ 109707.

\bibitem{Tak03}
M.\ Takesaki,
\textit{Theory of operator algebras II},
Encyclopaedia of Mathematical Sciences, vol.\ 125,
Springer, Berlin (2003).

\bibitem{Wil07}
D.\ P.\ Williams,
\textit{Crossed products of $C^*$-algebras},
Mathematical Surveys and Monographs, vol.\ 134,
American Mathematical Society, Providence, RI (2007).

\end{thebibliography}
\end{document}